\subjclass[2010]{30D05, 37F10}
\newtheorem{theo}{Theorem}[section]
\newtheorem{lemma}{Lemma}[section]
\newtheorem{prop}[theo]{Proposition}
\newtheorem{mainthm}[theo]{Main Theorem}
\theoremstyle{definition}
\newtheorem{defin}[theo]{Definition}
\newtheorem*{xrem}{Remark}
\newtheorem*{xcm}{Claim}
\numberwithin{equation}{section}
\def\cA{\mathcal{A}}
\def\C{\mathbb{C}}
\def\Chat{\hat{\mathbb C}}
\def\D{\mathbb{D}}
\def\S{\mathbb{S}}
\def\cA{\mathcal{A}}
\def\cN{\mathcal{N}}
\def\cF{\mathcal{F}}
\begin{document}
\date{\today}
\title[Postcritically minimal Newton maps]{A characterization of postcritically minimal Newton maps of complex exponential functions}

\author[K. Mamayusupov]{Khudoyor Mamayusupov}
\address{Jacobs University Bremen, Campus Ring 1, 28759, Bremen, Germany}

\address{National Research University
 Higher School of Economics Faculty of Mathematics, Usacheva 6, Moscow, Russia}

\email{k.mamayusupov@jacobs-university.de}

\thanks{Research was partially supported by the ERC advanced grant ``HOLOGRAM" and the Deutsche Forschungsgemeinschaft SCHL 670/4}
\begin{abstract} We obtain a unique, canonical one-to-one correspondence between the space of
	marked postcritically finite Newton maps of polynomials and the space of postcritically minimal Newton maps of entire maps that take the form $p(z) \text{exp}(q(z))$ for $p(z)$, $q(z)$ polynomials and $\text{exp}(z)$, the complex exponential function. This bijection preserves the dynamics and embedding of Julia sets and is induced by a surgery tool developed by Ha\"issinsky.
\end{abstract}
\keywords{Newton's method, basin of attraction, postcritically minimal}
\maketitle
\section{Introduction}
 The Newton map of an entire map $f(z)$ is defined by $N_f(z):=z-f(z)/f'(z)$. The Newton map of $f$ is a rational map only when $f(z)=p(z)\text{exp}(q(z))$, for $p(z)$, $q(z)$ polynomials and $\text{exp}(z)$ ($e^z$ in short) the complex exponential function. Then $N_{pe^q}(z)=z-p(z)/(p'(z)+p(z) q'(z))$. The roots of $p$ are attracting fixed points of $N_{p e^q}$. If $q$ is not constant, a point at $\infty$ is a parabolic fixed point with multiplier $+1$ for $N_{p e^q}$, otherwise $\infty$ is repelling.
 
 \begin{defin}[Postcritically minimal Newton map]\label{Def:PCM} A Newton map $N_{p e^q}$, with polynomials $p$ and $q$, is called postcritically minimal (PCM) if its Fatou set consists of superattracting basins and a parabolic basin of $\infty$, and the following hold.
 \begin{enumerate}[(a)]
 	\item Critical orbits in the Julia set and in superattracting basins are finite;
 	\item Every immediate basin of $\infty$ contains one (possibly with high multiplicity) critical point, and all critical points in a basin of $\infty$ are in \emph{minimal critical orbit relations}: if $c$ is a critical point in a strictly preperiodic component $U$ of the basin of $\infty$, of preperiod $m\ge 1$, then $N^{\circ m}_{pe^q}(c)$ is a critical point in one of the immediate basins of $\infty$.
 \end{enumerate}
  \end{defin}
 
 \begin{mainthm} \label{Thm:Main}
 For every pair of natural numbers $d\ge3$ and $1\le n\le d$, there exists a unique, canonical bijection between the space of Ha\"issinsky equivalent classes of $n$-marked postcritically finite Newton maps of polynomials, of degree $d$, and the space of affine conjugacy classes of degree $d$ postcritically minimal Newton maps of entire maps that take the form $P(z) \text{exp}(Q(z))$ for $P$, $Q$ polynomials with $deg(P)=d-n$ and $\deg(Q)=n$, which preserves dynamics and embedding of Julia sets.
  \end{mainthm}
 
 Marking is defined in Definitions~\ref{Def:MarkedChanneldiagram}~and~\ref{Space:PCF_Marking}, and Ha\"issinsky equivalent classes are defined in Definition~\ref{Def:Surgery_Equivalence}. Recall that a rational map is called postcritically finite (PCF) if its critical orbits are finite: every critical point in the Fatou set eventually terminates at a superattracting periodic point, and every critical point in the Julia set eventually terminates at a repelling periodic point. Moreover, when a superattracting fixed point captures some other critical point, their critical orbit relation is minimal in the sense that the latter lands at the former without wandering within the immediate basin. Otherwise, its orbit is infinite and never lands at the superattracting fixed point since the local dynamics are conjugate to a power map, $z\mapsto z^k$, where $k\ge 2$ is the local degree. 
 
 Relaxing the condition of postcritical finiteness comes at a cost; postcritical minimality is much weaker than postcritical finiteness. Depending on the degree of $q$, there exist $d-1$ distinct `parallel' spaces, of complex dimension $d-2$, of degree $d$ Newton maps of entire maps of the form $P(z)\text{exp}(Q(z))$ for polynomials $P$ and $Q$. In each of these spaces, we distinguish and characterize/classify all postcritically minimal Newton maps. However, we shall not build a parallel theory to the successful theory of classification of postcritically finite Newton maps of polynomials (see \cite{LMS} for the full classification of PCF Newton maps of polynomials). Our goal is to transfer this existing knowledge to our class of rational maps.
 
 The tool used for our characterization is developed by Ha\"issinsky in \cite{Ha} and is called \emph{parabolic surgery}. For the Newton map of a polynomial, this parabolic surgery procedure results in a new rational map, which turns out to be the Newton map of $p(z) \text{exp}(q(z))$, for some polynomials $p(z)$ and $q(z)$  (see \cite{Ma,Ma15}). 
 
 \begin{theo}[Parabolic surgery for the Newton map of a polynomial]
 	\label{Thm:Parabolic_Surgery_Newton}
 	Let $N_p$ be a postcritically finite Newton map of degree $d\ge 3$, and let $\Delta^+_n$ be its marked channel diagram with $1\le n\le d$. For all $1\le j\le n$, let $\cA(\xi_j)$ be the marked basins of superattracting fixed points $\xi_j$. Then there exist a homeomorphism $\phi$ and a postcritically minimal Newton map $N_{\tilde p e^{\tilde q}}$ of degree $d$ with $\deg( \tilde q)=n$ such that:
 	\begin{enumerate}[(a)]
 		\item $\phi\circ N_p(z)=N_{\tilde p e^{\tilde q}}\circ \phi(z)$ for all $z\not \in \bigcup_{1\leq j\leq n}\cA^{\circ}(\xi_j)$; in particular, $\phi:J(N_p)\to J(N_{\tilde p e^{\tilde q}})$ is a homeomorphism which conjugates $N_p$ to $N_{\tilde p e^{\tilde q}}$;
 		\item $\phi(\infty)=\infty$, and $\phi(\bigcup_{1\leq j\leq n}\cA(\xi_j))$ is the full basin of the parabolic fixed point at $\infty$ of $N_{\tilde p e^{\tilde q}}$;
 		\item $\phi$ is conformal on the interior of $\Chat\setminus \bigcup_{1\leq j\leq n}\cA(\xi_j)$;
 		\item the marked invariant accesses of the marked channel diagram $\Delta^+_n$ of $N_p(z)$ correspond to all dynamical accesses of the parabolic basin of $\infty$ for $N_{\tilde p e^{\tilde q}}$.
 	\end{enumerate}
 \end{theo}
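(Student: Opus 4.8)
The plan is to obtain both $\phi$ and $N_{\tilde p e^{\tilde q}}$ from a single application of Ha\"issinsky's parabolic surgery \cite{Ha}, carried out simultaneously at the $n$ marked superattracting basins $\cA(\xi_1),\dots,\cA(\xi_n)$ of $N_p$, and then to read off (a)--(d) and postcritical minimality from the construction; that the surgered map is again the Newton map of some $\tilde p e^{\tilde q}$ is the content of \cite{Ma,Ma15}, so the work here is chiefly to package that fact with the marking and the PCM bookkeeping. First I would fix, on each marked immediate basin $\cA^\circ(\xi_j)$ --- which is simply connected --- the global B\"ottcher coordinate $\beta_j\colon\cA^\circ(\xi_j)\to\D$ conjugating $N_p$ to $z\mapsto z^{k_j}$, $k_j$ the local degree of $N_p$ at $\xi_j$, together with a Koenigs coordinate at the repelling fixed point $\infty$ in which $N_p$ is $w\mapsto\frac{d}{d-1}\,w$. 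The marked invariant access $\gamma_j$ of $\Delta^+_n$ singles out the direction at $\infty$ along which $\cA^\circ(\xi_j)$ accumulates; on the union of $\cA^\circ(\xi_j)$ with a sector around $\gamma_j$ I would quasiconformally interpolate between these linear models and a fixed ``attracting petal'' model $w\mapsto w+1$, arranging the interpolation to be holomorphic near $\infty$ and to have dilatation supported in a compact annulus. This is the step that ``opens'' the channel $\gamma_j$.

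Spreading the resulting Beltrami coefficient over the whole grand orbit $\bigcup_{j\le n}\cA(\xi_j)$ by pulling it back along all iterates and all local branches of $N_p$ --- which does not increase the dilatation, each branch being holomorphic off the critical set --- produces an $N_p$-invariant Beltrami coefficient $\mu$ with $\|\mu\|_\infty<1$, supported in $\bigcup_{j\le n}\cA(\xi_j)$. Integrating $\mu$ via the Measurable Riemann Mapping Theorem yields a quasiconformal homeomorphism $\phi$ of $\Chat$ fixing $\infty$, and the $N_p$-invariance of $\mu$ forces $N_{\tilde p e^{\tilde q}}:=\phi\circ N_p\circ\phi^{-1}$ to be a rational map, of degree $d$ since $\phi$ is a homeomorphism. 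Items (a) and (b) are then immediate: the surgery changes $N_p$ only on $\bigcup_{j\le n}\cA^\circ(\xi_j)$, whose grand orbit $\bigcup_{j\le n}\cA(\xi_j)$ is carried by $\phi$ onto the full parabolic basin of the fixed point $\infty$ of $N_{\tilde p e^{\tilde q}}$; and (c) holds because $\mu\equiv0$ off $\bigcup_{j\le n}\cA(\xi_j)$, so $\phi$ is conformal there.

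By \cite{Ma,Ma15}, or by the characterization of Newton maps of the form $p e^q$, the map $N_{\tilde p e^{\tilde q}}$ really is such a Newton map: the petal model makes $\infty$ a parabolic fixed point with multiplier $+1$, and the marking of $\Delta^+_n$ ensures that opening the $n$ marked channels produces exactly $n$ attracting petals, whence $\deg(\tilde q)=n$; the $d-n$ unmarked roots are untouched and stay simple superattracting fixed points (recall that a postcritically finite Newton map of a polynomial has only simple roots), so $\deg(\tilde p)=d-n$, and the degree is $d$. Postcritical minimality is obtained by transport under $\phi$. On $J(N_p)$ and on the unmarked basins $\phi$ conjugates $N_p$ to $N_{\tilde p e^{\tilde q}}$, so those critical orbits remain finite and, where applicable, minimal, giving Definition~\ref{Def:PCM}(a). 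Since $\cA^\circ(\xi_j)$ is a B\"ottcher disk its only critical point is $\xi_j$, of multiplicity $k_j-1$, which $\phi$ carries into the $j$-th petal, so each immediate parabolic component carries exactly one (possibly high-multiplicity) critical point. Finally, if $c$ is a critical point of $N_p$ in a strictly preperiodic component of $\cA(\xi_j)$ of preperiod $m\ge1$, then $N_p^{\circ m}(c)=\xi_j$ --- because $N_p$ is postcritically finite with minimal superattracting orbit relations, and because in the B\"ottcher disk $\cA^\circ(\xi_j)$ the only point with a finite orbit is $\xi_j$; hence $\phi(c)$ lies in a strictly preperiodic parabolic component of preperiod $m$ and $N_{\tilde p e^{\tilde q}}^{\circ m}(\phi(c))$ is the petal critical point, which is the minimal critical orbit relation of Definition~\ref{Def:PCM}(b). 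Item (d) is built in: the dynamical accesses of the parabolic basin of $\infty$ are exactly the $n$ opened channels, that is, the $\phi$-images of the marked invariant accesses of $\Delta^+_n$.

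I expect the main obstacle to be the local surgery at $\infty$: one must design the interpolation so that the repelling germ carrying $d$ distinguished accesses becomes, after opening precisely the $n$ marked ones, a genuine parabolic germ with exactly $n$ attracting petals --- no more and no fewer --- while keeping the dilatation bounded and the gluing holomorphic at the parabolic point. This is where Ha\"issinsky's surgery and its Newton-map realization in \cite{Ma,Ma15} do the real work, and it is also where the combinatorics of the marking, which keeps the $n$ opened channels independent, is essential. Once that is set up, checking (a)--(d) and the conditions of Definition~\ref{Def:PCM} amounts to tracking the grand orbit of the modified region under $\phi$.
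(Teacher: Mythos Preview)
There is a genuine gap in your plan, and it is located exactly at the step you flag as ``the main obstacle''. You propose to build an $N_p$-invariant Beltrami coefficient $\mu$ with $\|\mu\|_\infty<1$, supported in $\bigcup_{j\le n}\cA(\xi_j)$, and to obtain $\phi$ from the Measurable Riemann Mapping Theorem. But if $\mu\equiv 0$ off the marked basins, then $\phi$ is conformal in a full neighbourhood of $\infty$ (since $\infty\in J(N_p)$ lies in that complement), and hence $N_{\tilde p e^{\tilde q}}=\phi\circ N_p\circ\phi^{-1}$ is holomorphically conjugate to $N_p$ near $\infty$. The multiplier at $\infty$ is therefore preserved: it remains $d/(d-1)>1$, so $\infty$ is still repelling, not parabolic with multiplier $+1$. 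No bounded-dilatation surgery supported away from $\infty$ can change the multiplier of the fixed point at $\infty$; this is precisely why Ha\"issinsky's parabolic surgery is \emph{not} an application of the standard MRMT. Note, incidentally, that the theorem only claims $\phi$ is a \emph{homeomorphism}, not a quasiconformal one.

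What actually happens in \cite{Ha} (and in the Newton-map adaptation in \cite{Ma,Ma15}, which is where the present paper delegates the proof of this theorem) is that the superattracting point $\xi_j$ is pushed along the marked access to collide with the repelling fixed point $\infty$; the straightening map is a limit of quasiconformal maps whose dilatations are unbounded but controlled (a David/trans-quasiconformal homeomorphism), and the integrability is obtained from a refined version of the MRMT. The resulting $\phi$ is conformal on the complement of the marked basins but fails to be quasiconformal on the whole sphere, and it is exactly this failure that allows the multiplier at $\infty$ to change. Your bookkeeping for items (a)--(d) and for postcritical minimality is fine once the correct $\phi$ is in hand, but the construction of $\phi$ itself needs to be replaced by the genuine Ha\"issinsky argument rather than a classical invariant-Beltrami surgery.
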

 
 Parabolic surgery, as stated above, defines a mapping from the space of Newton maps of polynomials to the space of Newton maps of entire maps that take the form $P(z)\text{exp}(Q(z))$ for polynomials $P(z)$ and $Q(z)$.
 
 The proof of Main Theorem~\ref{Thm:Main} has two parts: injectivity and surjectivity.
 
 Injectivity part is given in Theorem~\ref{Thm:Injectivity_Haissinsky}, which shows that results of parabolic surgeries applied to PCF $N_{p_1}$ and PCF $N_{p_2}$ with markings are affine conjugate if and only if $N_{p_1}$ and $N_{p_2}$ are affine conjugate, and this conjugacy sends the markings of $N_{p_1}$ to the markings of $N_{p_2}$.
 
 Surjectivity part is given in Theorem~\ref{Thm:Surjectivity_Haissinsky}, which states that every PCM Newton map is obtained uniquely from the PCF Newton map of a polynomial and the parabolic surgery. For this, we use Cui's result on parabolic to hyperbolic surgery to perturb the Newton map of $p e^q$ to the Newton map of a polynomial with markings. We then apply parabolic surgery to the latter through its marking and obtain the PCM Newton map of $P e^Q$, for some polynomials $P$ and $Q$. Finally, we show that both PCM Newton map of $p e^q$ that we started with and the PCM Newton map of $P e^Q$ are affine conjugate.
 
 Iterating the Newton map of a polynomial $p$ is referred to as Newton's method for finding the roots of $p$ in the complex plane. It is a classical tool, and in recent studies it was shown that Newton's method is robust and efficient even when the degree of $p$ is over a million; for more progress and details see \cite{SS}.
 In practical applications, adding an exponential factor $e^q$ with a polynomial $q$ comes with a disadvantage. In \cite{Haruta}, Haruta showed that the area of every immediate basin of an attracting fixed point of $N_{pe^q}$ is finite when $\deg(q)\geq 3$. This shows, in particular, that most of the area is taken by basins of $\infty$, where the iterates of the Newton's method applied to $p e^q$ will diverge to $\infty$, thus orbits starting at these points do not lead to roots of $p$.
 
 Preliminary material is presented and proved in \cite{Ma}. For notions used in holomorphic dynamics we refer to \cite{Mdyn}.
 
 \section{Dynamical Properties of Rational Newton maps}
 Let $f:\C\to\C$ be an entire map (polynomial or transcendental entire map). Its Newton map is a meromorphic map given by $N_f(z):=z-f(z)/f'(z)$.
 
 Following \cite{RS}, the Newton map $N_f$ is a rational map if and only if $f=p e^q$ for some polynomials $p$ and $q$. Let $m$, $n\geq 0$ be the degrees of $p$ and $q$, respectively. When $n = 0$ and $m \geq 2$, the point at $\infty$ is repelling with the multiplier $m/(m-1)$. When $n = 0$ and $m = 1$, then the Newton map is constant. If $n \geq 1$, the point at $\infty$ is parabolic with the multiplier $+1$ and multiplicity $n+1 \geq 2$. 
 
 Quadratic Newton maps are trivial. In this paper, we only consider Newton maps of degree at least $3$.
 
 The Julia set of a rational map $f$ is denoted by $J(f)$; its complement is the Fatou set, denoted by $F(f)$. By $\deg(f,z)$, denote the local degree of $f$ at a point $z$, and denote the critical points of $f$ by $C_f = \{z|\deg(f,z)>1 \}$. Denote the postcritical set of $f$ by $P_f=\overline{\bigcup_{n\ge1} f^{\circ n}(C_f)}$. A rational map $f$ is called \emph{postcritically finite} (\emph{PCF}) if $P_f$ is a finite set. It is called \emph{geometrically finite} if the intersection $P_f\cap J(f)$ is a finite set.
 
 The \emph{basin of attraction} of an attracting (parabolic) fixed point $\xi$ of $f$ is defined to be \(\text{int}\{z \in \Chat:\lim_{n\rightarrow \infty}f^{\circ n}(z)=\xi\},\) the interior of the set of starting points $z$ that eventually converge to $\xi$ under iterations of $f$, and is denoted by $\cA(\xi)$. The \emph{immediate basin} of $\xi$, denoted by $\cA^{\circ}(\xi)$, is the forward invariant connected component of the basin $\cA(\xi)$. For parabolic fixed points there could be more than one immediate basin.
 
 \begin{figure}[!h]
 	\centering
 	\includegraphics[scale=.43]{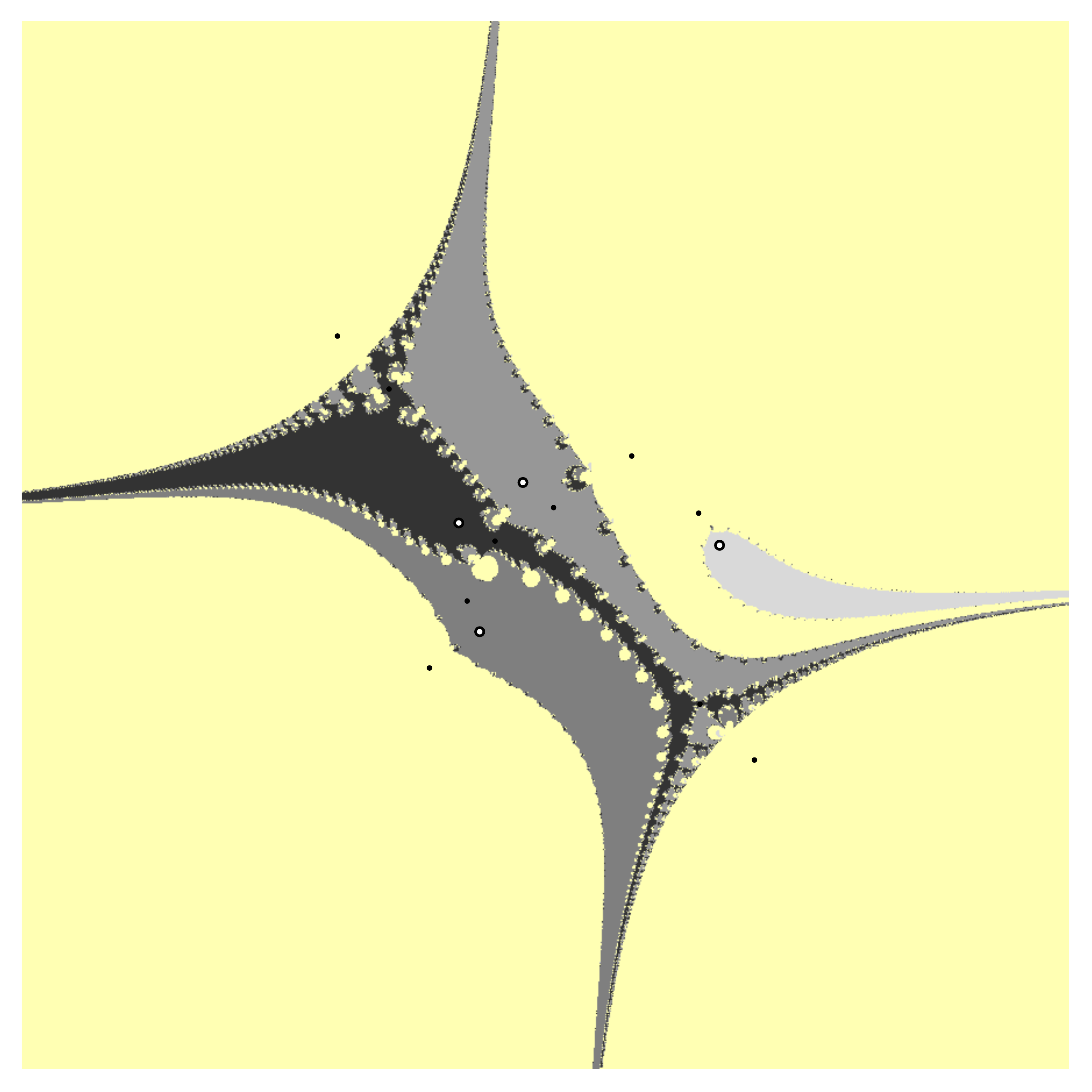}
 	
 	\caption{The Julia set of the Newton map of degree eight with basins is depicted. The basin of a parabolic fixed point at $\infty$ has four petals, one of which has two accesses to $\infty$. White dots with a black circular boundary are fixed points; black dots are non-fixed critical points.}
 	\label{Fig:NewtonBasin}
 \end{figure}
 
 An immediate basin of a fixed point is simply connected and unbounded for rational Newton maps. In \cite{Przytycki}, Przytycki answered a question posed by Manning (see \cite{Man}) and proved that, for Newton maps of polynomials, all immediate basins are simply connected and unbounded. In \cite{MS}, Mayer and Schleicher extended this result to the case of Newton maps of entire maps. Shishikura strengthened these results by proving that \emph{all} components of the Fatou set are simply connected for every rational map with a single weakly repelling fixed point \cite{Shishikura} and that, in particular, the Julia set is connected for all rational Newton maps of entire maps. Generalizing Shishikura's result even further, Bara\'nski {\em et al} in \cite{BFJK14} showed that the Julia set is always connected for all (transcendental) Newton maps of entire maps. The Julia set of a Newton map is
 depicted in Figure \ref{Fig:NewtonBasin}.
 
 \begin{defin}[Invariant access to $\infty$] \label{Def:Access}
 Let $\cA^{\circ}$ be the immediate basin of a fixed point $\xi \in \C$ or the parabolic fixed point
 at $\infty$ of a Newton map $N_{p e^q}$. Fix a point $z_0\in \cA^{\circ}$, and consider a curve $\Gamma:[0,\infty)\to \cA^{\circ}$ with $\Gamma(0) =z_0$ and $\lim_{t\to\infty}\Gamma(t)=\infty$. Its homotopy class (with endpoints fixed) within
 $\cA^{\circ}$ defines an {\em invariant access to $\infty$} in $\cA^{\circ}$.
 \end{defin}
 It is possible to consider any other point $z_0'\in \cA^{\circ}$ as the starting point of a curve $\gamma'$ to $\infty$. If we take a curve $\gamma_0$ joining $z_0$ to $z_0'$, then $\gamma_0\cup \gamma'$ becomes a curve starting at $z_0$ and landing at $\infty$ in the same homotopy class of $\gamma'$, meaning that the choice of $z_0$ is not relevant for the definition of invariant accesses.
 
 Let us consider a curve $\eta$ starting at $z_0$ and landing at $N_{p e^q}(z_0)$. Then both curves $\Gamma:[0,\infty)\to \cA^{\circ}$ starting at $z_0$ and landing at $\infty$ and the curve $N_{p e^q}(z_0)(\Gamma)\cup \eta$ belong to the same invariant access. In \cite{BFJK17}, this type of invariant access to $\infty$ is called {\em strongly invariant access}.
 For Newton maps, there always exists an invariant access called {\em dynamical access} to $\infty$ in immediate basins of the parabolic fixed point at $\infty$. To obtain this access, we consider a curve $\eta$ that joins $z_0$ to $N_{p e^q}(z_0)$ and take the homotopy class of the curve $\Gamma:=\bigcup_{k\ge 0}N_{p e^q}^{\circ k}(\eta)$. The latter lands at $\infty$ and is forward invariant under $N_{p e^q}$. The dynamical accesses are essential in obtaining bijection between the spaces of Newton maps.
 
 Let $N_{pe^{q}}$ be a Newton map, and let $U$ be a component of its Fatou set. A \emph{center} of $U$ is a point $\xi\in U$ such that:
 \begin{enumerate}[(a)]
 	\item when $U$ is a component of a superattracting basin, then $\xi$ is its unique critical periodic point;
 	\item when $U$ is an immediate basin of $\infty$, then $\xi$ is its unique critical point;
 	\item when $U$ is strictly preperiodic, of preperiod $m_U\ge 1$, then $N_{pe^{q}}^{\circ m_U}(\xi)$ is the center of $N_{pe^{q}}^{\circ m_U}(U)$, which is an immediate basin of a superattracting periodic point or of the basin of $\infty$.
 \end{enumerate}
 
 Postcritically minimal Newton maps of entire maps that take the form $pe^q$, for polynomials $p$ and $q$, enjoy similar properties to postcritically finite Newton maps of polynomials.
 In \cite{Ma}, the following result is proved.
 \begin{prop}[Normal forms for PCM Newton maps]
 	\label{Thm:Characterization_PCM}
 	Let $f$ be a PCM Newton map, $U$ be any component of the Fatou set of $f$ and let $V=f(U)$. Then $U$ contains a unique center $\xi_U$. Moreover, there exist Riemann maps $\psi_U:U\to \D$ with $\psi_U(\xi_U)=0$ and $\psi_V:V\to \D$ with $\psi_V(\xi_V)=0$ such that:
 	\begin{enumerate}[(a)]
 		\item if $U$ is an immediate basin of a parabolic fixed point at $\infty$ (in this case, $V=U$), then the following diagram is commutative,
 		\[
 		\begin{diagram}[heads=LaTeX,l>=3em]
 		U&\rTo^{f}   & U\\
 		\dTo^{\psi_U} && \dTo_{\psi_U}\\
 		\D & \rTo^{P_k}& \D
 		\end{diagram}
 		\]
 		where $P_k(z)=(z^k+a)/(1+a z^k)$ with $a=(k-1)/(k+1),$ the parabolic Blaschke product, and $k-1\ge 1$ is the multiplicity of the center of $U$ as a critical point of $f$;
 		\item in all other Fatou components (also including periodic ones), we have the following commutative diagram,
 		\[
 		\begin{diagram}[heads=LaTeX,l>=3em]
 		U&\rTo^{f}& V\\
 		\dTo^{\psi_U}& &\dTo_{\psi_V}\\
 		\D & \rTo^{z\mapsto z^k}&\D
 		\end{diagram}
 		\]
 		where $k-1$ is the multiplicity of the center of $U$ as a critical point of $f$, if the center is not a critical point of $f$, then we let $k=1$.
 	\end{enumerate}
 \end{prop}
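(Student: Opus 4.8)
The plan is to reduce everything to proper self-maps of the disc, normalize by Riemann maps, and propagate the normalization outward from the periodic Fatou components. Since every Fatou component $U$ of a rational Newton map is simply connected (Przytycki; Mayer--Schleicher; Shishikura) and $f:=N_{pe^q}$ restricted to $U$ is a proper holomorphic map onto $V=f(U)$, conjugating by Riemann maps turns $f|_U$ into a finite Blaschke product of some degree $k=k_U$, and Riemann--Hurwitz gives $\sum_{c\in U\cap C_f}(\deg(f,c)-1)=k_U-1$. The combinatorial input I rely on is that each Fatou component of a PCM Newton map contains at most one critical point of $f$, totally ramified if present: for an immediate basin of $\infty$ this is part~(b) of Definition~\ref{Def:PCM}, and for an immediate basin of a superattracting cycle it follows from part~(a) together with the minimality of capture relations (a captured critical point lands on the cycle without wandering inside an immediate basin). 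Crucially, total ramification propagates: around a superattracting cycle $W_0\to\cdots\to W_{p-1}\to W_0$ each one-step map is totally ramified at the single critical point $\xi_{W_i}$ it contains (if any), hence $f^{\circ p}$ has no critical point in $W_0$ other than the superattracting periodic point, so its B\"ottcher coordinate extends over all of $W_0$; pulling it back step by step yields Riemann maps $\psi_{W_i}$ (center to $0$) in which each one-step map is exactly $z\mapsto z^{k_{W_i}}$. The elementary fact used repeatedly is that a degree-$k$ Blaschke product fixing $0$ whose only critical point in $\D$ is $0$ equals $e^{i\theta}z^k$, with the rotation absorbed into the choice of $\psi$.

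For an immediate basin $W$ of $\infty$ the same analysis makes $B:=\psi_W\circ f\circ\psi_W^{-1}$ a degree-$k$ Blaschke product whose only critical point in $\D$ is $0=\psi_W(\xi_W)$, of multiplicity $k-1$; since $z\mapsto z^k$ has the same branch data and both are branched covers of $\D$ by $\D$ totally ramified over a single point, $B=\mu\circ(z\mapsto z^k)$ for some $\mu\in\mathrm{Aut}(\D)$. Because the only fixed points of $f$ are the roots of $p$ and $\infty$, the map $B$ has no fixed point in $\D$, so its Denjoy--Wolff point is a boundary fixed point with $B'=1$; it is the image of $\infty$ under the boundary extension of $\psi_W$, and using the remaining rotational freedom of $\psi_W$ to move it to $1$ we get $\mu(1)=1$ and $\mu'(1)k=B'(1)=1$. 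The parabolic Blaschke product is $P_k=M_a\circ(z\mapsto z^k)$ with $M_a(w)=(w+a)/(1+aw)$ and $a=(k-1)/(k+1)$, and one checks $M_a(1)=1$, $M_a'(1)=1/k$; so the assertion in case~(a) is equivalent to $\mu=M_a$.

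This last identification is the main obstacle. A priori the automorphisms of $\D$ fixing $1$ with derivative $1/k$ there form a one-parameter family, and the corresponding maps $\mu\circ(z\mapsto z^k)$ are pairwise non-conjugate --- they realize a one-parameter family of iterative residues at the parabolic fixed point --- so what must be shown is that postcritical minimality pins this residue down to the value realized by $P_k$. The cleanest route is through Theorem~\ref{Thm:Parabolic_Surgery_Newton}: a PCM Newton map of the form $pe^q$ is obtained from a postcritically finite Newton map of a polynomial by Ha\"issinsky's parabolic surgery, a local modification which, applied to the model $z\mapsto z^k$ of a superattracting root basin, installs on the surgered component precisely the parabolic model $P_k$; since $\psi_W$ conjugates $f|_W$ to the result of that surgery on the corresponding component, it conjugates $f|_W$ to $P_k$. (One must of course check this is independent of the polynomial and surgery used, which is part of the surjectivity argument.) Alternatively, one could try to prove the residue rigidity intrinsically, but I expect the surgery description to be the natural and shortest argument.

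Finally I would propagate the normal forms to the strictly preperiodic components $U$ by downward induction on the preperiod. If $U'=f(U)$ has been treated, then by Definition~\ref{Def:PCM} and minimality every critical point $c$ of $f$ lying in $U$ satisfies $f^{\circ m_U}(c)=\xi_W$, the center of the periodic component $W=f^{\circ m_U}(U)$; since in the established normal forms $\xi_W$ has $\xi_{U'}$ as its only preimage under $f^{\circ m_{U'}}|_{U'}$, every such $c$ lies in $(f|_U)^{-1}(\xi_{U'})$, and Riemann--Hurwitz then forces $\xi_{U'}$ to have exactly one preimage $\xi_U$ in $U$, totally ramified of local degree $k_U$ (or $f|_U$ to be injective, if $k_U=1$). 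Choosing a Riemann map $\psi_U$ with $\psi_U(\xi_U)=0$, the map $\psi_{U'}\circ f\circ\psi_U^{-1}$ is a degree-$k_U$ Blaschke product fixing $0$ whose only critical point in $\D$ is $0$, hence $e^{i\theta}z^{k_U}$, and the rotation is absorbed into $\psi_U$; this gives case~(b). Uniqueness of the center is now immediate throughout: $\xi_U$ is the unique critical point of $f$ in $U$ when $U$ carries one, and otherwise the unique preimage in $U$ of the center of $f(U)$.
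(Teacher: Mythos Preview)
The paper does not prove this proposition here; it cites \cite{Ma}. So there is no in-paper argument to compare against, and one must judge your proposal on its own merits. Your treatment of case~(b) and of the pull-back to preperiodic components is sound and standard: simple connectivity of Fatou components, properness of $f|_U$, Riemann--Hurwitz, and the minimal-relation clause of Definition~\ref{Def:PCM} do exactly the work you describe.

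The genuine gap is in case~(a). You correctly isolate the obstacle: writing $B=\psi_U\circ f\circ\psi_U^{-1}=\mu\circ(z\mapsto z^k)$ with $\mu\in\mathrm{Aut}(\D)$, $\mu(1)=1$, $\mu'(1)=1/k$, leaves a one-real-parameter family of candidates (in the half-plane model, $\tilde\mu(w)=kw+\beta$ with $\beta\in\mathbb R$), and only $\beta=0$ gives $P_k$. But your proposed resolution is circular. You invoke the statement ``a PCM Newton map is obtained from a PCF Newton map by Ha\"issinsky's surgery''; Theorem~\ref{Thm:Parabolic_Surgery_Newton} only gives the forward direction, and the converse is precisely the surjectivity Theorem~\ref{Thm:Surjectivity_Haissinsky}, whose proof (Part~C) uses Proposition~\ref{Thm:Characterization_PCM} in an essential way to build the conjugacy on parabolic components. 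So you cannot appeal to it here.

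The fix is the intrinsic argument you set aside. One checks that the Denjoy--Wolff point $1$ of $B$ is a \emph{triple} fixed point (this is exactly the ``double parabolic'' noted later in the paper for $P_{k_j}$), and a short computation shows $B''(1)=-i\beta$, so multiplicity $\ge 3$ forces $\beta=0$ and hence $B=P_k$. The multiplicity-$3$ claim follows from the geometry of the basin at $\infty$: near the parabolic point, $\partial U$ lies in the two adjacent \emph{repelling} petals (any boundary point in an attracting petal would lie in the Fatou set), so under the Carath\'eodory boundary extension (available since $J(f)$ is locally connected by \cite{TY}) $B|_{S^1}$ repels from $1$ on both sides. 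If $\beta\neq 0$ the single attracting petal of $B$ at $1$ is tangent to $S^1$ (by the Blaschke symmetry $c:=B''(1)/2$ is purely imaginary), forcing an arc of $S^1$ into the attracting basin of $B$ --- contradicting the repelling boundary behaviour just established. This pins down $B=P_k$ without any reference to surgery.
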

 
 Let us define the channel diagram of the postcritically finite Newton map of a \emph{polynomial}. Let superattracting fixed points of a postcritically finite Newton map $N_p$ be denoted by $a_i$ and their immediate basins by $\cA^{\circ}_i$ for all $1\le i\le d$. Let $\phi_i:(\cA^{\circ}_i,a_i) \to (\mathbb{D},0)$ be a
 B\"ottcher coordinate with the property that $\phi_i(N_p(z))=\phi_i^{k_i}(z)$ for each $z \in \D$, where $k_i-1 \geq 1$ is the multiplicity of $a_i$ as a critical point of $N_p$. The power map $z \mapsto z^{k_i}$ fixes $k_i-1$ internal rays in $\mathbb{D}$. Under $\phi_i^{-1}$ these rays map to the $k_i-1$ pairwise disjoint (except for endpoints) simple curves $\Gamma^1_i,\Gamma^2_i,\ldots,\Gamma^{k_i-1}_{i}\subset \cA^{\circ}_i$ that connect $a_i$ to $\infty$ are pairwise non-homotopic in $\cA^{\circ}_i$ and are invariant under $N_p$ as sets. They represent all invariant accesses to $\infty$ in $\cA^{\circ}_i$.
 
 The union
 \[ \Delta = \bigcup_{i=1}^d \bigcup_{j=1}^{k_{i}-1}
 \overline{ \Gamma^{j}_{i} }\]
 forms a connected graph in $\Chat$ that is called the \emph{channel diagram} of $N_p$. It follows that the channel diagram is forward invariant, $N_p(\Delta)= \Delta$. The channel diagram records the mutual locations (embedding) of the immediate basins of $N_p$. Moreover, the channel diagram of a Newton map tells us all about the possible applications of parabolic surgery, but to apply parabolic surgery we only need one access to $\infty$ within an immediate basin. So we need to introduce a marking of the channel diagram.
 
 \begin{defin}[Marked Channel Diagram $\Delta^+_n$] \label{Def:MarkedChanneldiagram}
 For each $i\in \{1,\ldots,d\}$ we mark \emph{at most one} fixed ray $\Gamma^{j^*}_{i}$ in the immediate basin of $a_i$. If a ray in the immediate basin of $a_i$ is marked, then we call the basin of $a_i$ a \emph{marked basin}. The \emph{marked channel diagram} is a channel diagram $\Delta$ with marking, that is an additional information on which fixed rays are selected/marked. If $n\le d$ rays are marked, we denote the $n$-marked channel diagram by $\Delta^+_n$.
 \end{defin}
 A basin can be marked or unmarked. \emph{Marking} defines a single access among all accesses within a \emph{marked} immediate basin through which parabolic surgery will be performed.
 
 Consider a Newton map $N_{p e^q}(z)=z-p(z)/(p'(z)+p(z)q'(z))$ of degree $d\ge3$, and let $\deg (q)=n\le d$, then the number of distinct roots of $p$ is $d-n$. Notice that the leading coefficient of $p$ cancels, so we can assume that $p$ is monic. Similarly, the constant term of $q$ is also not relevant, since we take the derivative of $q$. Any automorphism of $\Chat$ fixing $\infty$ is an affine transformation of the form $z\mapsto a z+b$ ($a\not = 0$), which is, in general, a composition of a scaling and a translation. 
 When $q(z)\not \equiv \text{const.}$, by scaling, we change the leading coefficient of $q$ to any nonzero complex number. For instance, we make $q'$ a monic polynomial. Indeed, a scaling by $a$ conjugates 
 \begin{equation*}
 N_{p e^q}(a z)/a=(a z-\frac{p(a z)}{p'(a z)+p(a z)q'(a z)})/a=z-\frac{p(a z)}{a p'(a z)+p(a z)a q'(a z)}.
 \end{equation*} 
 Let $q'(z)=b_{n-1}z^{n-1}+b_{n-2}z^{n-2}+\cdots$ be the derivative of $q$, where $b_{n-1}\not =0$ is the leading coefficient of $q'(z)$, then we obtain $a q'(a z)=b_{n-1}a^n z^{n-1}+b_{n-2}a^{n-1}z^{n-2}+\cdots$. By a choice of $a$ such that $b_{n-1}a^n=1$, we make $q'(z)$ monic. In other words, if we let $p_a(z):=p(az)$ and $q_z(z):=q(az)$ then $N_{p e^q}(a z)/a=N_{p_a e^{q_a}}(z)$. Now we are only left with one more freedom: essentially, a translation. By translation, we may further assume that either $p$ or $q$ is centered: all roots sum up to zero. Translation by $b$ conjugates
 
 \begin{equation*}
 N_{p e^q}(z+b)-b=z-\frac{p(z+b)}{p'(z+b)+p(z+b)q'(z+b)}.
 \end{equation*}
 
 When $q(z)\equiv \text{const.}$, by translation we make $p$ centered; and by scaling we can have $p(1)=0$. We can change the multiplier of an attracting fixed point of a Newton map by a suitable local quasiconformal surgery, therefore, we may further assume that all roots of $p$ are simple: thus we assume that all finite fixed points are superattracting for Newton maps.
 
 Finally, as explained above, let us normalize polynomials $p$ and $q$ as follows:
 \begin{description}
 	\item if $q\equiv \text{const.}$, we assume that $p$ is centered and $p(1)=0$ (i.e. $z=1$ is a root of $p$);
 	\item if $q\not \equiv \text{const.}$, we assume that $q'$ is monic; moreover, we assume that either $p$ or $q$ (the one with the degree at least $2$) is centered;
 \end{description}
 furthermore, we assume that $p$ is monic and has only simple roots (we achieve this by local surgery).
 
 These lead us to define the following main objects of this paper.
 
 \begin{defin}	\label{Space:Newton}
 For each pair $d\ge 3$ and $1\le n \le d$, denote by $\cN(d-n,n)$ the space of Newton maps $N_{p e^q}$, of degree $d$, normalised as above. For instance, $\cN(d):=\cN(d,0)$ is the space of Newton maps of polynomials $P$, of degree $d$. The polynomials $P$ are monic and centered, they have a root at $z=1$ and all roots are simple.
\end{defin}
 
 \begin{defin}\label{Space:PCM}
 Denote by $\cN_\text{pcf}(d)$ the space of \emph{postcritically finite} Newton maps of polynomials, of degree $d\ge 3$, that are centered, monic and have a root at $z=1$.
\end{defin}
 
 \begin{defin} \label{Space:PCF_Marking}
 For each pair $d\ge 3$ and $1\le n \le d$, denote by $\cN^{+,n}_\text{pcf}(d)$ the space of all postcritically finite Newton maps in $\cN_\text{pcf}(d)$ with all markings $\Delta^+_n$ ($n$-marked channel diagram) at accesses in $n$ marked immediate basins.
\end{defin}
 
\begin{defin}
 \label{Def:SpacePCM}
 For each pair $d\ge 3$ and $1\le n \le d$, denote by $\cN_\text{pcm}(d-n,n)$ the space of \emph{postcritically minimal} Newton maps in $\cN(d-n,n)$.
 \end{defin}
 
 By the above arguments and normalization, we obtain the following lemma.
 \begin{lemma}
 	Assume that $f$ and $\tilde f\in\cN(d-n,n)$ are conjugate by an affine map $\phi$, i.e. $\phi\circ f=\tilde f\circ \phi$.
 	\begin{itemize}
 		\item If $n=0$, the case of the Newton map of a polynomial, then $\phi(z)=z/a$, where $a\neq 0$ is a finite fixed point of $\tilde f$;
 		\item If $n\ge1$, then $\phi(z)=z/a$, where $a^n=1$.
 	\end{itemize}
 \end{lemma}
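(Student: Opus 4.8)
The plan is to reduce everything to the affine covariance of Newton's method and then read off $\phi$ from the normalizations that define $\cN(d-n,n)$. Being affine, $\phi$ automatically fixes $\infty$; write $\phi(z)=\alpha z+\beta$ with $\alpha\neq 0$. Newton's method is affine covariant, $N_{g\circ\psi}=\psi^{-1}\circ N_{g}\circ\psi$ for every affine $\psi$ and every entire $g$, so taking $\psi=\phi^{-1}$ and $g=pe^{q}$ the hypothesis $\phi\circ N_{pe^{q}}\circ\phi^{-1}=N_{\tilde p e^{\tilde q}}$ becomes $N_{\tilde p e^{\tilde q}}=N_{(pe^{q})\circ\phi^{-1}}$. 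The key observation is that two entire functions of the shape $(\text{polynomial})\cdot e^{(\text{polynomial})}$ with equal logarithmic derivatives differ only by a nonzero multiplicative constant, while an exponential of a non-constant polynomial is never rational; together these force
\[
\tilde q(z)=q(\phi^{-1}(z))+\mathrm{const},\qquad \tilde p(z)=c\,p(\phi^{-1}(z))\quad(c\neq 0).
\]
Thus $\phi$ is, in effect, an affine change of variable in the pair $(p,q)$, and in particular it carries the (simple) roots of $p$ bijectively onto those of $\tilde p$, i.e.\ the finite fixed points of $f$ onto those of $\tilde f$.

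For $n=0$ I would then argue as follows. Here $P,\tilde P$ are monic, centered, of degree $d$ and vanish at $1$, and $\tilde P(z)=c\,P\!\left(\frac{z-\beta}{\alpha}\right)$. The roots of $\tilde P$, being $\phi$ of the roots of $P$, sum to $\alpha\cdot 0+d\beta=d\beta$; since $\tilde P$ is centered and $d\geq 3$ this forces $\beta=0$, so $\phi(z)=\alpha z$. Finally $1$ is a root of $\tilde P$, hence $\alpha a=1$ for some root $a$ of $P$, so $\phi(z)=z/a$ with $a$ a finite fixed point of $f$ (equivalently $1/a=\phi(1)$ is a finite fixed point of $\tilde f$).

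For $n\geq 1$ I would differentiate the relation for $\tilde q$, getting $\tilde q'(z)=\alpha^{-1}q'\!\left(\frac{z-\beta}{\alpha}\right)$; comparing the leading coefficients of the two monic polynomials $\tilde q'$ and $q'$ (each of degree $n-1$) gives $\alpha^{-n}=1$, i.e.\ $\alpha^{n}=1$. To eliminate $\beta$ I would use the centering normalization on whichever of $p,q$ has degree $\geq 2$ --- one of them always does, since $\deg p+\deg q=d\geq 3$. If it is $q$ (so $n\geq 2$), then the $z^{n-2}$-coefficient of $q'$ vanishes, and by the formula above together with $\alpha^{n}=1$ the $z^{n-2}$-coefficient of $\tilde q'$ equals $-(n-1)\beta$; since $\tilde q$ is centered too this must vanish, so $\beta=0$. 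If it is $p$ (so $n=1$, hence $\alpha=1$ and $\deg p=d-1\geq 2$), then $\tilde p(z)=c\,p(z-\beta)$, and comparing the centered polynomials $\tilde p$ and $p$ gives $(d-1)\beta=0$, so again $\beta=0$. In either case $\phi(z)=\alpha z$ with $\alpha^{n}=1$, i.e.\ $\phi(z)=z/a$ with $a=1/\alpha$ and $a^{n}=1$.

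The step I expect to be the real content is the reduction in the first paragraph: upgrading an a priori arbitrary affine conjugacy to an honest affine change of variable in $(p,q)$, via $N_{F}=N_{G}\Rightarrow F=cG$ and the splitting into polynomial and exponential factors. Once that is in place the rest is routine coefficient matching; the one bookkeeping point worth recording is that for every admissible pair $(d,n)$ with $d\geq 3$ at least one of $p,q$ has degree $\geq 2$, which is exactly what makes the centering normalization available to kill the translation part $\beta$.
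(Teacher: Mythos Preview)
Your proof is correct and is precisely a fleshing-out of the paper's own argument, which consists only of the sentence ``By the above arguments and normalization, we obtain the following lemma'' placed immediately after the normalization discussion; you have made explicit the reduction (via affine covariance of Newton's method and the implication $N_F=N_G\Rightarrow F=cG$) to an affine change of variable in $(p,q)$, and then read off $\alpha$ and $\beta$ from the monic/centered/root-at-$1$ constraints exactly as the paper's preceding paragraphs set them up. One small observation worth recording: your computation yields that $a=1/\alpha$ is a root of $p$, i.e.\ a finite fixed point of $f$ (and hence $1/a$ is one of $\tilde f$), which is indeed what the normalization forces --- the ``$\tilde f$'' in the printed statement appears to be a slip.
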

 
 We don't have a true parameter space; some number of maps are conformally conjugate as can be seen by the above lemma.
 It is now clear that for every $n\le d$ the parameter plane of $\cN(d-n,n)$ is of complex dimension $d-2$.
 
 \section{Plumbing surgery by G. Cui}\label{Ch:Surgery}
 In \cite{C, CT1, CT2},  Cui developed a surgery method, which is called plumbing surgery, to turn parabolic points into hyperbolics: attracting and repelling. Let us state  Cui's result from \cite{C}.
 \begin{theo}[Cui]
 	\label{Thm:Cui}
 	Suppose that $g$ is a geometrically finite rational map and $X$ is a parabolic cycle of $g$. Then there exist a continuous family of geometrically finite sub-hyperbolic rational maps $\{f_t\}$ $\left(1\le t<\infty \right)$ and a continuous family of quasiconformal
 	conjugacies $\{\phi_t\}$ from $f_1$ to $\{f_t\}$, such that the following conditions hold:
 	\begin{enumerate}[(a)]
 		\item	$\{f_t\}$ uniformly converges to $g$ as $t\to\infty$.
 		\item $\{\phi_t\}$ uniformly converges to a quotient map $\phi$ of $\Chat$ as $t\to\infty$ and
 		$\phi\circ f_1=g\circ \phi$, i.e. the following diagram is commutative.
 		\[\begin{diagram}[heads=LaTeX,l>=3em]
 		\Chat &\rTo^{f_1} & \Chat\\
 		\dTo^{\phi} & & \dTo_{\phi}\\
 		\Chat &\rTo^{g} & \Chat.
 		\end{diagram}\]
 		Moreover, $\phi$ is a homeomorphism from $J(f_1)$ onto $J(g)$.
 		\item \label{item-Cui-Thm} For every periodic Fatou domain $D$ of $g$, if $D$ is a parabolic component associated with $X$, then $\phi^{-1}(D)$ is contained in an attracting domain of $f_1$ and $\phi$ is quasiconformal homeomorphism on any domain compactly contained in $\phi^{-1}(D)$.
 		
 		Otherwise, $\phi^{-1}(D)$ is a Fatou domain of $f_1$ and $\phi$ is conformal on $\phi^{-1}(D)$.
 	\end{enumerate}
 \end{theo}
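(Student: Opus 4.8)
Since Theorem~\ref{Thm:Cui} is quoted essentially verbatim from Cui's work \cite{C, CT1, CT2}, the intention is to use it as a black box rather than reprove it; but were one to establish it from scratch, the plan would be a \emph{plumbing} (intertwining) surgery carried out in Fatou coordinates near the parabolic cycle $X$. First I would reduce to a parabolic \emph{fixed} point $z_0$ of multiplier $1$ with, say, $\ell\ge1$ attracting and $\ell$ repelling petals — the periodic case following by passing to a suitable iterate and distributing the construction over the cycle. On each petal a Fatou coordinate conjugates $g$ to the translation $w\mapsto w+1$ on a half-plane; on the overlaps of adjacent petals the transition maps are the \'Ecalle--Voronin moduli, and geometric finiteness of $g$ guarantees these are well behaved (no critical values escaping into the parabolic flower), which is exactly what makes the surgery local.

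Next I would modify the translation model: on an annular region replace $w\mapsto w+1$ by a holomorphic family $g_t$ possessing one attracting and one repelling fixed point whose positions escape to $\infty$ in opposite directions as $t\to\infty$, so that $g_t\to(w\mapsto w+1)$ locally uniformly. Gluing this model back through the Fatou coordinates and interpolating quasiconformally on a fixed collar produces a quasiregular map $F_t$ on $\Chat$ that agrees with $g$ outside a small neighborhood of $X$ and whose Beltrami coefficient has support and dilatation bounded uniformly in $t$. Spreading that coefficient by the dynamics and straightening with the measurable Riemann mapping theorem yields genuine rational maps $f_t$ quasiconformally conjugate to $f_1$; one then checks they are sub-hyperbolic, since every critical point outside $X$ retains its old (finite, or attracted-to-hyperbolic) orbit, while the former parabolic basins associated with $X$ become the basin of the newly created attracting cycle. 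Letting $t\to\infty$, the Beltrami coefficients shrink to $0$, the straightening maps converge, and combined with $F_t\to g$ this gives $f_t\to g$ uniformly and $\phi_t\to\phi$, a quotient map of $\Chat$ with $\phi\circ f_1=g\circ\phi$; the only collapsing occurs inside the attracting petals that get absorbed, so $\phi$ is a homeomorphism on $J(f_1)$, is quasiconformal on compacta of the preimages of the parabolic components attached to $X$, and is conformal on every other periodic Fatou domain because the surgery was never supported there.

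\textbf{The hard part} is the uniform control as $t\to\infty$: one must show that the Fatou coordinates and the quasiconformal conjugacies have geometry bounded independently of $t$, so that the limit $\phi$ exists, is genuinely a quotient of the sphere rather than something more degenerate, and remains injective on the Julia set. This bounded-geometry estimate is precisely where the geometric finiteness of $g$ is indispensable, and it constitutes the technical core of \cite{C}; everything else is a fairly standard packaging of qc surgery and the measurable Riemann mapping theorem.
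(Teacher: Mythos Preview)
Your proposal is correct in its framing: the paper does not prove Theorem~\ref{Thm:Cui} at all --- it is simply quoted from Cui's preprint \cite{C} (see also \cite{CT1, CT2}) and used as a black box, exactly as you identify in your first sentence. There is therefore no ``paper's own proof'' to compare against.

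Your outline of the plumbing construction --- Fatou coordinates near the parabolic cycle, replacing the translation model by a family with a separated attracting/repelling pair, quasiconformal interpolation on a collar, straightening, and uniform dilatation bounds as $t\to\infty$ --- is indeed a faithful high-level summary of Cui's method, and your identification of the bounded-geometry estimate as the crux is accurate. Since the paper never enters into any of this, your sketch goes well beyond what the paper itself contains; for the purposes of this paper, citing \cite{C} is the intended and sufficient ``proof''.
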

 
 The theorem uses the following notion.
 
\begin{defin}[Quotient map]
 \label{Def:QuotientMap}
 Let $h$ be a continuous surjective map on $\Chat$. The map $h$ is called a \emph{quotient} map if $h^{-1}(y)$ is either a singleton or a full continuum for every point $y\in\Chat$, i.e. $\Chat\setminus h^{-1}(y)$ is a simply connected domain.
\end{defin}
 
 \begin{xrem}
 Note that $f_1$ in the above theorem is a sub-hyperbolic geometrically finite map: all of its non-repelling cycles are attracting. The theorem converts all parabolic domains into attracting domains. Since the semi-conjugacy $\phi$ is conformal in other Fatou components, the multipliers of attracting cycles of $g$ are preserved. For a postcritically minimal Newton map $g$, item $(\ref{item-Cui-Thm})$ of the theorem allows us to conclude that $f_1$ could be further changed to a postcritically finite Newton map by a standard quasiconformal surgery.
\end{xrem}
 We use the following lemma during the construction of a local topological conjugacy between Newton maps at their parabolic fixed points at $\infty$, for its proof please refer to \cite[Lemma 3.4.]{CT2}
 \begin{lemma}
 	\label{Lem:Partial_Local}
 	Suppose rational maps $f$ and $g$ with parabolic fixed points $z_0$ and $z_1$, respectively, are given. Let $\phi_0$ be a $K$-quasiconformal conjugacy between their attracting flowers. Then for any $\epsilon >0$, there is a local $(K+\epsilon)$-quasiconformal conjugacy $\phi$ between $f$ and $g$ such that $\phi=\phi_0$ on a smaller attracting flower.
 \end{lemma}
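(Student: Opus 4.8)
The plan is to localise at the two parabolic points via the Leau--Fatou flower and Fatou coordinates, and to reduce the extension to a controlled quasiconformal interpolation on the repelling \'Ecalle cylinders. Recall that at a parabolic fixed point of multiplicity $\nu+1$ there are $\nu$ attracting and $\nu$ repelling petals whose union is a punctured neighbourhood of the point, each petal carrying a Fatou coordinate conjugating the map to $w\mapsto w+1$ on a half-plane; the quotient of a repelling petal by the dynamics is the repelling \'Ecalle cylinder, conformally $\C/\mathbb{Z}$, a bi-infinite cylinder with two ends. Since $\phi_0$ is a homeomorphic conjugacy between the attracting flowers, $f$ and $g$ automatically have the same number $\nu$ of petals in the same cyclic pattern, so the local models match combinatorially. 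The attracting flower is forward invariant and we keep $\phi=\phi_0$ there; the only thing to construct is $\phi$ on the \emph{cores} of the repelling petals, i.e.\ on the parts of the repelling petals lying outside the attracting flower.

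The key point is that a repelling petal meets the attracting flower exactly in its two \emph{sepals}, and in the repelling Fatou coordinate the two sepals are neighbourhoods of the two ends of the repelling cylinder. Restricting $\phi_0$ to the sepals of a repelling petal of $f$ and transporting by the repelling Fatou coordinates of $f$ and of $g$, one obtains a $K$-quasiconformal map defined near the two ends of $\mathcal{C}^-_f$ and taking values near the two ends of $\mathcal{C}^-_g$, matching the ends correctly; because $\phi_0$ conjugates $f$ to $g$ and the Fatou coordinates trivialise the dynamics on the cylinders, no equivariance condition survives at the cylinder level. It then remains to extend this map across the ``middle'' annulus that separates the two end-neighbourhoods to a quasiconformal homeomorphism $\mathcal{C}^-_f\to\mathcal{C}^-_g$ agreeing with the given germs near both ends.

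This extension step is where the $\epsilon$ is spent, and it is the main obstacle. The idea is to first shrink all petals: the smaller the repelling petal, the further out toward the two ends its sepals sit in the Fatou coordinate, hence the larger the modulus of the middle annulus to be bridged. A standard quasiconformal interpolation then produces an extension whose maximal dilatation exceeds the dilatation forced by the mismatch of the moduli of the two middle annuli (this forced part is at most $K$, since $\phi_0$ is $K$-quasiconformal, so the moduli are comparable up to a factor $K$) by an amount comparable to $1$ over the modulus of the bridging annulus; choosing the petals small enough makes this excess less than $\epsilon$. Lifting the resulting cylinder homeomorphism to the repelling petals produces $\phi$ there; since the cylinder is the dynamical quotient, the lift automatically satisfies $\phi\circ f=g\circ\phi$ wherever both sides are defined, and by construction it coincides with $\phi_0$ on the sepals.

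Finally I would glue the pieces: set $\phi=\phi_0$ on the (smaller, because of the shrinking) attracting flower, $\phi$ as above on the repelling petals, and $\phi(z_0)=z_1$. The pieces agree along the common sepal boundaries and each is quasiconformal, so removability of quasiconformal maps along these boundary arcs, together with removability of the isolated point $z_0$, shows that $\phi$ is a $(K+\epsilon)$-quasiconformal homeomorphism of a neighbourhood of $z_0$ onto a neighbourhood of $z_1$. Since its domain is not $f$-invariant, the identity $\phi\circ f=g\circ\phi$ holds only where both sides make sense, which is exactly the asserted local conjugacy, and $\phi=\phi_0$ on the smaller attracting flower. Everything except the controlled interpolation on the repelling cylinders is standard parabolic-fixed-point theory and quasiconformal gluing.
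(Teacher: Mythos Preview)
The paper does not prove this lemma; it simply quotes it from Cui--Tan \cite[Lemma~3.4]{CT2} and uses it as a black box. So there is no ``paper's own proof'' to compare against.

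That said, your outline is the standard argument for this type of statement and is, up to details, the proof one finds in the cited source: pass to Fatou coordinates, descend to the repelling \'Ecalle cylinders, observe that the given conjugacy $\phi_0$ on the attracting flower induces $K$-quasiconformal germs near the two ends of each repelling cylinder, and bridge the middle by a quasiconformal interpolation whose excess dilatation is controlled by the modulus of the intervening annulus --- which can be made arbitrarily large by shrinking the petals. The lift back to the plane is automatically equivariant, and gluing plus removability finishes. Two points worth tightening in a full write-up: first, the interpolation must be arranged to be a \emph{homeomorphism} between the two cylinders (not just a quasiregular map), which requires a little care in how one sets up the boundary data on the two bounding quasicircles of the middle annulus; second, one should check that the lifts on adjacent repelling petals match up with the same $\phi_0$ on the shared sepals, which amounts to a consistent choice of lifts and is straightforward but should be said. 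Neither is a genuine gap.
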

 
 We shall use the following fact on extensions of quasisymmetric maps between the boundaries of quasidiscs and quasiannuli.
 \begin{prop}[Quasiconformal interpolation]\cite[Proposition~2.30]{BF14}\label{Prop:Interpolation}
 	\begin{enumerate}[(a)]
 		\item Suppose $G_1$ and $G_2$ are quasidiscs bounded by $\gamma_1$ and $\gamma_2$ respectively, and let $f:\gamma_1\to\gamma_2$ be quasisymmetric. Then $f$ extends to a quasiconformal map $\hat{f}:\overline{G_1} \to \overline{G_2}$.
 		\item For $j=1, 2$, suppose $A_j$ are open quasiannuli bounded by the quasicircles $\gamma_j^i,\gamma_j^o$. Let $f^i: \gamma_1^i\to\gamma_2^i$ and $f^o: \gamma_1^o\to\gamma_2^o$ be quasisymmetric maps between the inner and outer boundaries respectively. Then there exists a quasiconformal map $f:\overline{A_1} \to \overline{A_2}$ extending the boundary maps $f^i$ and $f^o$.
 	\end{enumerate}
 \end{prop}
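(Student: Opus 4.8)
The plan is to reduce both parts to a single classical fact: a homeomorphism of $\partial\D$ is the boundary trace of a quasiconformal self-homeomorphism of $\overline{\D}$ if and only if it is quasisymmetric (the Beurling--Ahlfors, or Douady--Earle, extension theorem). The two inputs that make this reduction possible are the defining features of quasidiscs and quasiannuli: a quasidisc is the image of $\D$ under a quasiconformal homeomorphism of $\Chat$, and a doubly connected domain bounded by quasicircles is conformally a round annulus with a boundary correspondence that is quasisymmetric on each boundary quasicircle.

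For part (a), fix quasiconformal homeomorphisms $\Phi_1,\Phi_2$ of $\Chat$ with $\Phi_j(\D)=G_j$; their boundary restrictions $\Phi_j|_{\partial\D}\colon\partial\D\to\gamma_j$ are quasisymmetric. Then $h:=\Phi_2^{-1}\circ f\circ\Phi_1|_{\partial\D}$ is a quasisymmetric self-homeomorphism of $\partial\D$, being a composition of three such maps. By the Beurling--Ahlfors extension theorem, $h$ extends to a quasiconformal self-homeomorphism $H$ of $\overline{\D}$, and $\hat f:=\Phi_2\circ H\circ\Phi_1^{-1}$ is then a quasiconformal homeomorphism $\overline{G_1}\to\overline{G_2}$. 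It extends $f$, since each of the three factors extends continuously to the boundary and $\Phi_2\circ h\circ\Phi_1^{-1}=f$ there.

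For part (b), uniformize each quasiannulus conformally, $u_j\colon A_j\to\{1<|z|<R_j\}$, chosen so that $u_j(\gamma_j^i)=\{|z|=1\}$; because each boundary curve is a quasicircle, hence a locally connected Jordan curve, $u_j$ extends to a homeomorphism of closures (Carath\'eodory) whose restriction to each boundary quasicircle is quasisymmetric. Post-composing $u_2$ with the quasiconformal map $\rho e^{i\theta}\mapsto\rho^{\log R_1/\log R_2}e^{i\theta}$, we may assume $R_1=R_2=:R$; then $g^i:=u_2\circ f^i\circ u_1^{-1}$ and $g^o:=u_2\circ f^o\circ u_1^{-1}$ are quasisymmetric self-homeomorphisms of $\{|z|=1\}$ and $\{|z|=R\}$. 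It remains to interpolate them across the round annulus. Passing to the logarithmic coordinate, one works on the flat cylinder $[0,\log R]\times(\mathbb{R}/2\pi\mathbb{Z})$, splits it along the middle circle $\{s=\tfrac12\log R\}$, and on each half applies the Beurling--Ahlfors extension (to the $2\pi$-periodic lift of the relevant quasisymmetric boundary map, which stays $2\pi$-periodic under the extension formula, so it descends to the cylinder) to produce a quasiconformal map equal to the prescribed quasisymmetric map on the outer edge and equal to the identity on the middle circle. The two halves agree on the middle circle, hence glue to a quasiconformal self-map $F$ of the cylinder; transporting $F$ back through $\exp$ and then through $u_1,u_2$ gives $f:=u_2^{-1}\circ F\circ u_1$, which extends $f^i$ and $f^o$.

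The main obstacle is exactly this last interpolation across the round annulus. A naive convex combination of the two boundary homeomorphisms in the radial variable does not work: a quasisymmetric circle homeomorphism may be singular and may have unbounded difference quotients, so the radial convex combination need not have bounded dilatation. The remedy is to route the construction through the Beurling--Ahlfors extension, whose explicit averaging formula is designed to produce a controlled dilatation from an arbitrary quasisymmetric boundary function, and to arrange each half-cylinder piece to reduce to the identity on the separating circle so that the two pieces glue quasiconformally. By contrast, the quasidisc case in part (a) is comparatively routine once one recalls that boundary traces of quasiconformal self-maps of $\D$ are precisely the quasisymmetric homeomorphisms.
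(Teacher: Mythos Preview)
The paper does not prove this proposition; it is quoted from \cite{BF14} as a black box, so there is no proof in the paper to compare against. Your argument for part~(a) is correct and standard. In part~(b) there is a minor imprecision and a genuine gap.

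The minor point: Carath\'eodory only yields a \emph{homeomorphic} boundary extension of the uniformizer $u_j$; the quasisymmetry of $u_j$ on each boundary component is a separate fact, and it holds because a quasicircle admits a quasiconformal reflection, which lets one extend $u_j$ quasiconformally across each boundary curve. You invoke the quasicircle hypothesis but then cite only Carath\'eodory, which does not justify the quasisymmetry.

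The real gap is the interpolation across the round annulus. The Beurling--Ahlfors extension of a quasisymmetric $h:\mathbb{R}\to\mathbb{R}$ is a quasiconformal self-map of a half-plane; it neither preserves any horizontal strip nor restricts to the identity on any horizontal line. Concretely, if $h(t)=t+p(t)$ with $p$ bounded and $2\pi$-periodic, then the imaginary part of $\hat h(x+iy)$ equals $y+\tfrac{1}{y}\int_0^y\bigl(p(x+t)-p(x-t)\bigr)\,dt$, which is not $y$ in general, and the real part tends to $x+\bar p$ (the mean of $p$) rather than to $x$ as $y\to\infty$. So your map on each half-cylinder does not land in the half-cylinder and is not the identity on the middle circle; there is nothing to glue. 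You correctly diagnose that naive linear interpolation fails, but the replacement you propose fails for a different reason. One correct route, after reducing to a round annulus $A=\{1<|z|<R\}$ with quasisymmetric boundary self-maps $g^i,g^o$: extend $g^i$ to a quasiconformal $G^i:\overline\D\to\overline\D$ and $g^o$ to a quasiconformal self-map $G^o$ of $\overline{\Chat\setminus R\D}$, define a Beltrami coefficient on $\Chat$ equal to $\mu_{G^i}$ on $\D$, to $0$ on $A$, and to $\mu_{G^o}$ outside, and let $\Phi$ solve the Beltrami equation. Then $\Phi\circ(G^i)^{-1}$ and $\Phi\circ(G^o)^{-1}$ are conformal on their discs, so $\Phi$ carries both boundary circles to round circles; after a M\"obius normalization $\Phi$ is a quasiconformal self-map of $\overline A$ (moduli agree since $\Phi|_A$ is conformal) with boundary values $M_0\circ g^i$ and $M_1\circ g^o$ for some M\"obius $M_0,M_1$. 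These residual M\"obius factors are real-analytic circle diffeomorphisms with bi-Lipschitz bounds, and for \emph{those} the linear interpolation in the logarithmic coordinate is quasiconformal, so a further explicit self-map of $\overline A$ removes them.
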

 
 The following is a classical result on lifting properties of covers that we use later.
 \begin{lemma}
 	\label{Lem:Lifting}
 	Let $Y,Z$ and $W$ be path-connected and locally path-connected Hausdorff spaces with base points $y\in Y, z\in Z$ and $w\in W$. Suppose $p:W\to Y$ is an unbranched covering map and $f:Z\to Y$ is a continuous map such that $f(z)=p(w)=y$.
 	\[\begin{diagram}[heads=LaTeX,l>=3em]
 	Z,z &\rTo^{\tilde f} & W,w\\
 	& \rdTo_{f} & \dTo_{p}\\
 	& & Y,y
 	\end{diagram}\]
 	There exists a unique continuous lift $\tilde f$ of $f$ to $p$ with $\tilde f (z)=w$ for which the above diagram is commutative i.e. $f=p\circ \tilde f$ if and only if the induced homomorphisms $f_*: \pi_1(Z, z) \to \pi_1(Y, y)$ and $p_*:\pi_1(W, w)\to(Y, y)$ at the level of fundamental groups satisfy \[ f_*(\pi_1(Z,z)) \subset p_* (\pi_1(W,w)), \] where $\pi_1$ denotes the fundamental group.
 \end{lemma}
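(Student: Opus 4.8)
This is the classical lifting criterion of covering space theory, and the plan is to give the standard proof, taking care that path-connectedness and local path-connectedness are used exactly where needed. The necessity is immediate: if a continuous lift $\tilde f$ with $f=p\circ\tilde f$ exists, then functoriality of $\pi_1$ gives $f_*=p_*\circ\tilde f_*$, hence $f_*(\pi_1(Z,z))=p_*\bigl(\tilde f_*(\pi_1(Z,z))\bigr)\subset p_*(\pi_1(W,w))$. No hypothesis beyond continuity is used here.

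For sufficiency, assume $f_*(\pi_1(Z,z))\subset p_*(\pi_1(W,w))$ and construct $\tilde f$ pointwise. Given $z'\in Z$, use path-connectedness to choose a path $\gamma$ in $Z$ from $z$ to $z'$; then $f\circ\gamma$ is a path in $Y$ issuing from $y$, which by the path-lifting property of the covering $p$ lifts uniquely to a path $\widetilde{f\circ\gamma}$ in $W$ starting at $w$, and I set $\tilde f(z'):=\widetilde{f\circ\gamma}(1)$. To see this is well defined, if $\gamma_0$ is another such path then $\gamma\ast\overline{\gamma_0}$ is a loop at $z$, so $[f\circ(\gamma\ast\overline{\gamma_0})]\in f_*(\pi_1(Z,z))\subset p_*(\pi_1(W,w))$; hence this loop lifts to a loop based at $w$, and combining uniqueness of path lifting with the homotopy lifting property shows the lifts of $f\circ\gamma$ and $f\circ\gamma_0$ from $w$ share the same endpoint. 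By construction $p\circ\tilde f=f$ and $\tilde f(z)=w$.

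The one genuinely non-formal step is continuity, which is where local path-connectedness of $Z$ enters. Given $z'$ with $\tilde f(z')=w'$, pick an evenly covered neighborhood $V\ni f(z')$ in $Y$ and let $\tilde V$ be the sheet over $V$ containing $w'$, so $p|_{\tilde V}\colon\tilde V\to V$ is a homeomorphism; then choose a path-connected open $U\ni z'$ with $f(U)\subset V$, using continuity of $f$ and local path-connectedness. For $z''\in U$, concatenating a fixed path to $z'$ with a path inside $U$ from $z'$ to $z''$ shows the relevant lift remains in $\tilde V$, so $\tilde f|_U=(p|_{\tilde V})^{-1}\circ f|_U$, which is continuous; hence $\tilde f$ is continuous. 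Uniqueness is then easy: if $\tilde f_1,\tilde f_2$ both lift $f$ and agree at $z$, then for any $z'$ and any path $\gamma$ from $z$ to $z'$ the two paths $\tilde f_i\circ\gamma$ lift $f\circ\gamma$ and start at $w$, so they coincide by uniqueness of path lifting, and evaluating at the endpoint gives $\tilde f_1(z')=\tilde f_2(z')$. (Hausdorffness plays only the background role of ensuring the covering space behaves well, e.g. that sheets over an evenly covered set are disjoint.) The main obstacle, if any, is purely the continuity argument, which is precisely the reason local path-connectedness is hypothesized; everything else is bookkeeping with path and homotopy lifting for $p$.
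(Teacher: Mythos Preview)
Your proof is correct and is the standard argument for the lifting criterion found in any algebraic topology text. Note, however, that the paper does not actually prove this lemma: it is simply stated as ``a classical result on lifting properties of covers that we use later'' and left without proof, so there is nothing in the paper to compare your argument against.
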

 
 \section{Injectivity of parabolic surgery}
 
 Parabolic surgery defines a mapping from the space of $n$-marked postcritically finite Newton maps of polynomials (recall that it is denoted by $\cN^{+,n}_\text{pcf}(d)$) to the space of postcritically minimal Newton maps of entire maps that take the form $pe^q$ with $\deg(q)=n$ (denoted by $\cN_\text{pcm}(d-n,n)$). Different surgeries applied to the same Newton map of a polynomial with its different accesses may produce rational maps that are affine conjugate. For the simplest case: $n=1$, we have two ways of applying parabolic surgery to $2z^3/(3z^2-1)\in \cN_\text{pcf}(3)$ along its (two distinct) immediate basins with single accesses to $\infty$ in each (see Fig.~\ref{Fig:CubicNewtonDouble} for its Julia set~\footnote{The Newton map $2z^3/(3z^2-1)$ is conjugate via $z\to i/(\sqrt{2}z)$ to the cubic polynomial $z^3+\frac32z$.}). The resulting Newton maps of these parabolic surgeries will have a parabolic basin with a single immediate basin. There exists a single PCM Newton map with that property in $\cN_\text{pcm}(2,1)$. It is $z-(z^2+c)/(z^2+2z+c)$ for $c=-\frac{1}{4}$ (see Fig.~\ref{Fig:Parabolic} Left for its Julia set~\footnote{The Newton map $z-(z^2+c)/(z^2+2z+c)$ for $c=-\frac{1}{4}$ is conjugate via $z\to i/z-1/2$ to the cubic polynomial $z^3-iz^2+z$.}). Thus both applications of parabolic surgery produce the same result.
 \begin{figure}[!ht]
 	\centering
 	\includegraphics[scale=.21]{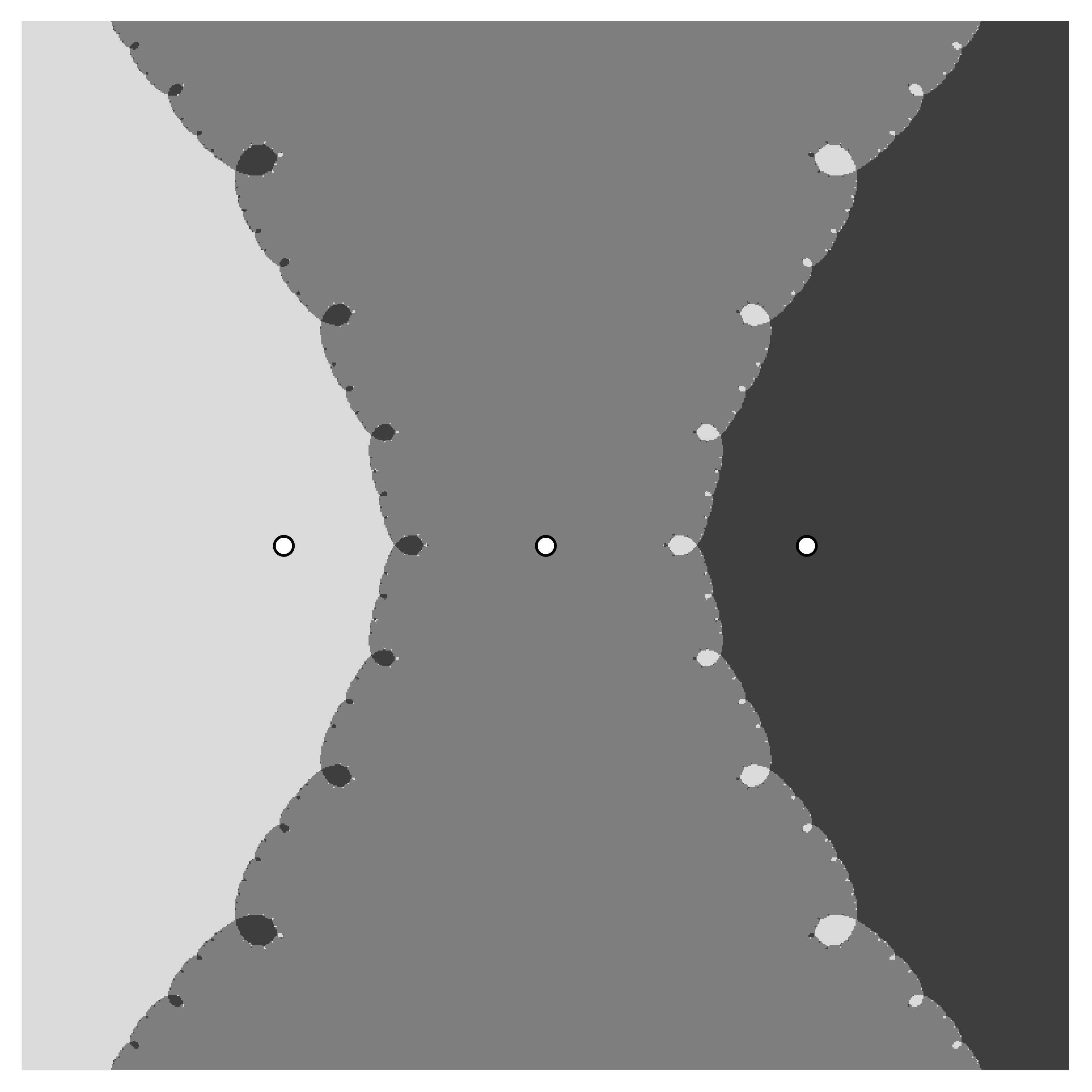}
 	
 	\caption{The Julia set of $2 z^3/(3 z^2-1)$, the cubic Newton map, where $z=0$ is a superattracting fixed point with full invariant basin in gray area, the other superatracting basins are in light gray and dark gray areas respectively.}
 	\label{Fig:CubicNewtonDouble}
 \end{figure}
 \begin{figure}[!ht]
 	\centering
 \includegraphics[scale=.15]{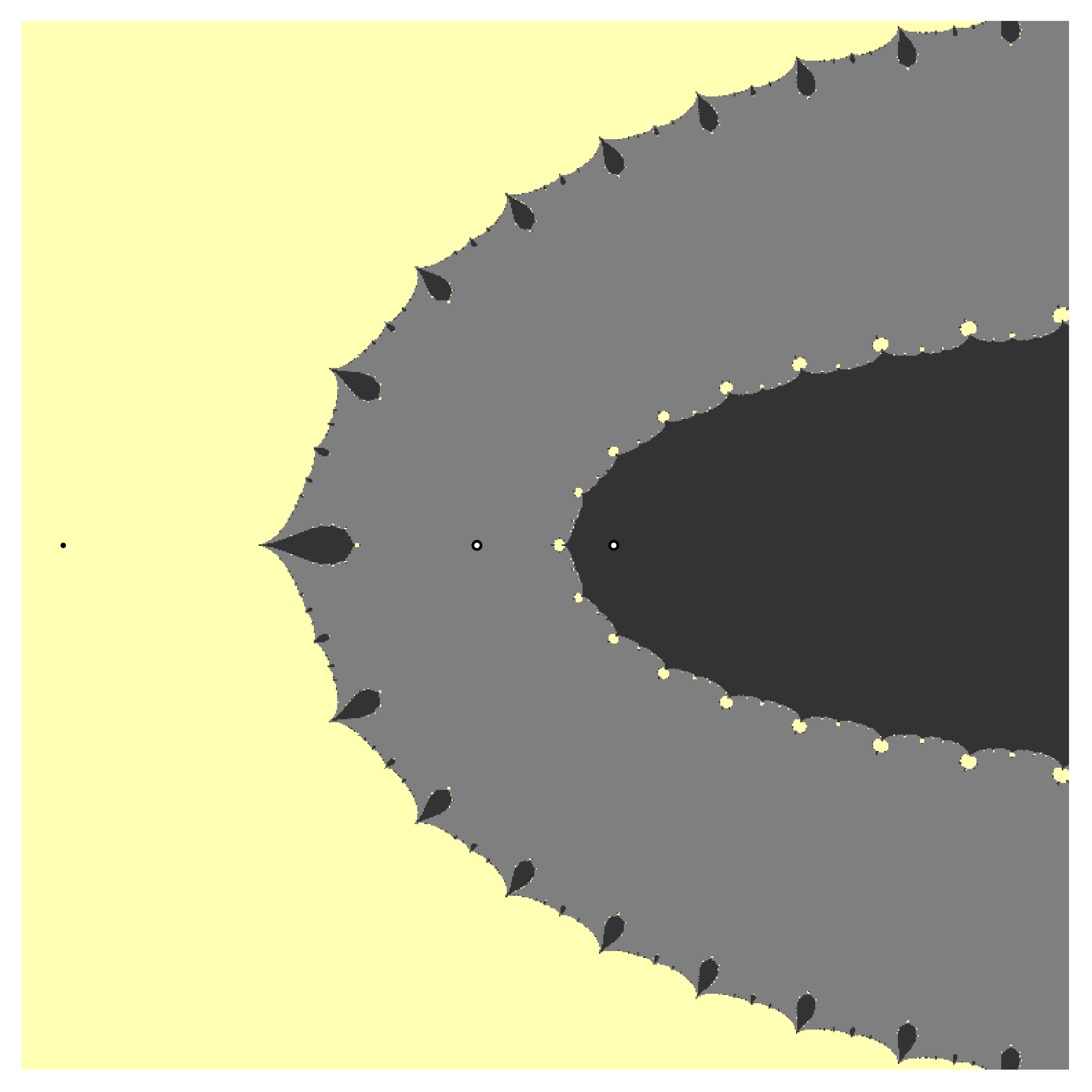}
 x\hspace{5mm}
 \includegraphics[scale=.167]{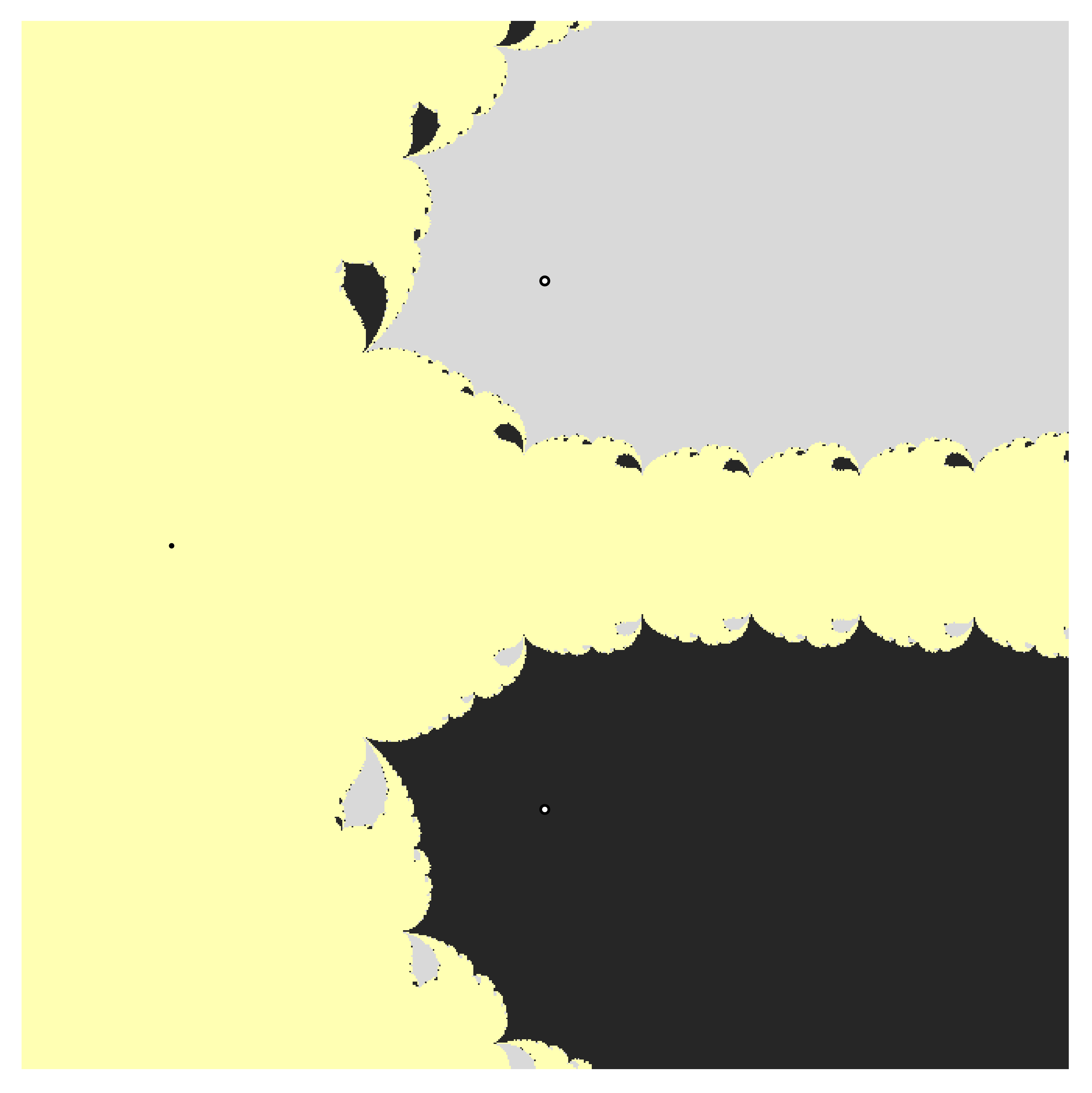}
 	
 	\caption{{\em Left:} The Julia set of $z-(z^2+c)/(z^2+2z+c)$ for $c=-\frac{1}{4}$. The basin of fixed point $z=-0.5$ is in gray area and the basin of other fixed point is in dark gray area, the basin of $\infty$ is in light yellow area on the left. {\em Right:} The Julia set of $z-(z^2+c)/(z^2+2z+c)$ for $c=2$. The basins of fixed points are in light gray and dark gray areas, the basin of $\infty$ has two accesses and is in light yellow area, respectively.}
 	\label{Fig:Parabolic}
 \end{figure} 
 
 Similarly, consider applications of parabolic surgery to $2z^3/(3z^2-1)$ through its third immediate basin, which has two distinct accesses. We can perform parabolic surgery in two ways, but results are same. It is easy to see that the result is $z-(z^2+c)/(z^2+2z+c)$ for $c=2$, which is a unique PCM Newton map with two accesses in the immediate basin of $\infty$ (see Fig.~\ref{Fig:Parabolic} Right for its Julia set).
 We identify this kind of ``distinct" parabolic surgeries if their results are same or M\"obius conjugate to each other. It is easy to see that the relation under this identification is an equivalence relation. Let us state it in the following as a definition.
 
\begin{defin} [$\sim_H$ Ha\"issinsky equivalence] \label{Def:Surgery_Equivalence}
 Let $F$ and $G$ be results of applications of parabolic surgery to $N_{p_1}$ with marking $\Delta^+_n(N_{p_1})$ and $N_{p_2}$ with marking $\Delta^{\prime +}_n(N_{p_2})$, both belonging to $\cN^{+,n}_\text{pcf}(d)$, respectively. The two parabolic surgeries are said to be Ha\"issinsky equivalent if there exists an affine map $M$ such that $M\circ F=G\circ M$. Notation $\sim_H$ is used for Ha\"issinsky equivalent surgeries.
\end{defin}
 
 The following theorem characterizes equivalent parabolic surgeries, which states that distinct surgeries produce non-conjugate (distinct) rational maps unless the underlying maps with markings themselves are conjugate and the conjugacy interchanges the markings.
 
 \begin{theo}[Injectivity of parabolic surgery]
 	\label{Thm:Injectivity_Haissinsky}
 	For every pair of natural numbers $d\ge3$ and $1\le n\le d$, parabolic surgeries applied to $N_{p_1}$ with its $n$-marking $\Delta^+_n(N_{p_1})$ and $N_{p_2}$ with its $n$-marking $\Delta^{\prime +}_n(N_{p_2})$, both belonging to $\cN^{+,n}_\text{pcf}(d)$, are Ha\"issinsky equivalent if and only if there exists an affine map $L$ such that
 	\begin{itemize}
 		\item $L\circ N_{p_1}=N_{p_2}\circ L$; and
 		\item $L(\Delta^+_n(N_{p_1}))=\Delta^{\prime +}_n(N_{p_2}).$
 	\end{itemize}
 	In other words, the mapping $\cF_n: \cN^{+,n}_\text{pcf}(d)/\sim_H \to \cN_{\text{pcm}}(d-n,n)$ induced by parabolic surgery is an injective mapping, which preserves embedding and the dynamics of Julia sets.
 \end{theo}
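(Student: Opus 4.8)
The plan is to prove both implications by a single device: transport the surgery homeomorphisms of Theorem~\ref{Thm:Parabolic_Surgery_Newton} through the given affine conjugacy to produce a homeomorphism of $\Chat$ that is conformal on every Fatou component, and then upgrade it to a M\"obius map using conformal removability of the Julia sets involved (Julia sets of semihyperbolic, respectively geometrically finite, rational maps with $J\neq\Chat$ are conformally removable; PCF Newton maps are semihyperbolic and PCM Newton maps are geometrically finite, and in both cases $J\neq\Chat$). Throughout, let $\phi_1$ (resp.\ $\phi_2$) denote the surgery homeomorphism attached by Theorem~\ref{Thm:Parabolic_Surgery_Newton} to $N_{p_1}$ with $\Delta^+_n$ (resp.\ $N_{p_2}$ with $\Delta^{\prime +}_n$), with surgered map $F$ (resp.\ $G$); recall that $\phi_i$ fixes $\infty$, is conformal on the interior of the complement of the marked basins, conjugates $N_{p_i}$ to $F$ (resp.\ $G$) off the marked \emph{immediate} basins, carries the union of the marked basins onto the full parabolic basin of $\infty$, and, by part (d), carries each marked access onto the dynamical access of the corresponding parabolic immediate basin.

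For the ``if'' direction I would start from an affine $L$ with $L\circ N_{p_1}=N_{p_2}\circ L$ and $L(\Delta^+_n)=\Delta^{\prime +}_n$, and set $M:=\phi_2\circ L\circ\phi_1^{-1}$. A direct computation with the intertwining relations shows $M\circ F=G\circ M$ on the complement of the finitely many parabolic immediate basins of $F$, and $M(\infty)=\infty$. On every superattracting basin of $F$ the map $M$ is a composition of conformal maps, hence conformal. On a parabolic immediate basin $P^{\circ}$ of $F$ I would invoke Proposition~\ref{Thm:Characterization_PCM}: in the Riemann coordinates $\Psi_F\colon P^{\circ}\to\D$ and $\Psi_G\colon M(P^{\circ})\to\D$ straightening $F$ and $G$ to parabolic Blaschke products $P_k$ and $P_{k'}$, the composite $\Psi_G\circ M\circ\Psi_F^{-1}$ is an automorphism of $\D$ fixing $0$ (center to center) and $1$ (the parabolic boundary fixed point), hence equals the identity, which forces $k=k'$ and shows $M|_{P^{\circ}}$ is conformal. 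This step uses that $L$, carrying the marked ray to the marked ray, reduces to the identity in the B\"ottcher coordinates of the corresponding immediate basins of $N_{p_1}$ and $N_{p_2}$, together with the fact that the parabolic surgery is performed from one fixed model per local degree, so the two surgeries induce the \emph{same} transition map between B\"ottcher and Blaschke coordinates. Thus $M$ is conformal on the whole Fatou set of $F$ and continuous on $\Chat$; since $J(F)$ is conformally removable, $M$ is M\"obius, hence affine (it fixes $\infty$), and, agreeing with $G\circ M$ on a nonempty open set, it satisfies $M\circ F=G\circ M$ globally.

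For the ``only if'' direction I would start from an affine $M$ with $M\circ F=G\circ M$ and set $\psi:=\phi_2^{-1}\circ M\circ\phi_1$. Then $\psi$ is a homeomorphism of $\Chat$ fixing $\infty$, conjugating $N_{p_1}$ to $N_{p_2}$ on $J(N_{p_1})$, carrying marked basins onto marked basins and, since a conjugacy preserves dynamical accesses, marked accesses onto marked accesses. As above, $\psi$ is conformal on every non-marked Fatou component, and on each marked immediate basin $\cA^{\circ}_i$ the same B\"ottcher/Blaschke comparison (the relevant $\D$-automorphism fixing $0$ and $1$, hence trivial) yields $\psi|_{\cA^{\circ}_i}=\gamma^{-1}\circ\beta_i$ for B\"ottcher coordinates $\beta_i,\gamma$ of the corresponding basins, which is conformal; propagating through iterated preimages, where $\phi_1$ and $\phi_2$ do conjugate, gives conformality on all strictly preperiodic marked components as well. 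Hence $\psi$ is conformal on the Fatou set of $N_{p_1}$ and continuous on $\Chat$; as $J(N_{p_1})$ is conformally removable, $\psi$ is M\"obius, hence affine; it conjugates $N_{p_1}$ to $N_{p_2}$ everywhere (the conjugacy, a priori only on the Julia set, extends by rational rigidity), and it carries $\Delta^+_n$ onto $\Delta^{\prime +}_n$. Taking $L:=\psi$ finishes this direction, and the assertion that $\cF_n$ preserves the embedding and dynamics of Julia sets is then a restatement of parts (a)--(b) of Theorem~\ref{Thm:Parabolic_Surgery_Newton}.

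I expect the main obstacle to be the conformality of the transported map on the parabolic, respectively marked, immediate basins: this is the only region where the dynamics genuinely differ and where the surgery homeomorphisms fail to be conformal, so it forces one to use the explicit Ha\"issinsky model of the surgery (fixed once and for all per local degree), the matching of local degrees across the basin correspondence, and the rigidity remark that a conformal automorphism of $\D$ fixing both the center and the parabolic boundary fixed point is the identity. A secondary technical point, needed to pass from ``conformal off the Julia set and continuous'' to ``M\"obius'', is the conformal removability of the Julia sets of PCF and of PCM (geometrically finite) Newton maps; alternatively, for the ``if'' direction one may bypass removability by appealing to the well-definedness, up to affine conjugacy, of the output of parabolic surgery applied to a fixed marked Newton map.
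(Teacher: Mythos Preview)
Your overall strategy—transport the surgery homeomorphisms through the given affine conjugacy, show the resulting sphere homeomorphism is conformal off the Julia set, then invoke conformal removability—is appealing and, were it to go through, would give a cleaner endgame than the paper's appeal to Thurston rigidity. The gap is in the step where you claim conformality of the transported map on the marked (respectively parabolic) \emph{immediate} basins.

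Theorem~\ref{Thm:Parabolic_Surgery_Newton} asserts only that each $\phi_i$ is conformal on the interior of the complement of the \emph{full} marked basin and that it conjugates off the marked \emph{immediate} basins; it says nothing about $\phi_i$ restricted to a marked immediate basin beyond being a homeomorphism. Your assumption that ``the two surgeries induce the same transition map between B\"ottcher and Blaschke coordinates'' is precisely what is not granted: the surgery homeomorphism is produced by integrating a Beltrami coefficient supported on the whole marked basin, so there is no a~priori reason for $\Psi_F\circ\phi_1\circ\beta_1^{-1}$ and $\Psi_G\circ\phi_2\circ\beta_2^{-1}$ to coincide as self-maps of $\D$. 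Without that, $\psi|_{\cA^{\circ}_i}=\phi_2^{-1}\circ M\circ\phi_1$ is merely a homeomorphism and you cannot conclude it equals the B\"ottcher comparison $\gamma^{-1}\circ\beta_i$; your removability argument then has nothing to act on. The same objection applies verbatim to your ``if'' direction.

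The paper sidesteps this by \emph{discarding} the transported map on the marked immediate basins and \emph{redefining} $\psi$ there to be the B\"ottcher comparison $\psi^2_j:=\phi_{j_g}^{-1}\circ\phi_{j_f}$. The substantive work is then to check that $\psi^2_j$ and $\psi^1$ agree on $\partial\cA^{\circ}(\xi_j)$: this is done by showing that the two normalized Riemann maps induce the \emph{same} landing equivalence relation on $\S^1$, using that $\psi^1$ carries iterated preimages of $\infty$ to iterated preimages of $\infty$. The outcome is only a \emph{topological} conjugacy on $\mathbb{S}^2$ (note $\psi^1$ is not asserted to be conformal on strictly preperiodic marked components either, since those lie inside the full marked basin where part~(c) gives no information), and the affine $L$ is produced by Thurston rigidity for PCF branched covers rather than by removability. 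You could repair your argument along these lines—redefine $\psi$ on every marked-basin component by successive lifts of the B\"ottcher comparison and verify the boundary matching at each level—after which conformal removability of $J(N_{p_1})$ would indeed finish the ``only if'' direction; for the ``if'' direction the paper simply invokes well-definedness of the surgery output up to affine conjugacy, exactly the alternative you mention at the end.
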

 
 \begin{proof}
 For one direction, assume we have an affine map $L$ such that:
 \begin{itemize}
 	\item $L\circ N_{p_1}=N_{p_2}\circ L$
 	\item $L(\Delta^+_n(N_{p_1}))=\Delta^{\prime +}_n(N_{p_2})$.
 \end{itemize} 
 Let us apply parabolic surgery to $N_{p_1}$ and $N_{p_2}$ through marked channel diagrams $\Delta^+_n(N_{p_1})$ and $\Delta^{\prime +}_n(N_{p_2})$ respectively, then the result trivially follows by the construction of parabolic surgery. The converse is the main part of the theorem, which we deal with now.
 
 For the other direction, let us use simpler notations: $f:=N_{p_1}$, $g:=N_{p_2}$, and let $F$ and $G$ be the resulting functions of parabolic surgery applied to $f$ with marking $\Delta^+_n(f)$ and $g$ with marking $\Delta^{\prime +}_n(g)$ respectively. 
 For $1\leq j\leq n$, let us denote by $\cA(\xi_j)$ the marked basins of $f$. By Theorem~\ref{Thm:Parabolic_Surgery_Newton}, there exists a homeomorphism $\phi_f:\Chat\to\Chat$ such that the following diagram commutes.
 \[
 \begin{diagram}[heads=LaTeX,l>=3em]
 \Chat\setminus \cup_{1\leq j\leq n}\cA^{\circ}(\xi_j) &\rTo^{f} & \Chat\setminus \cup_{1\leq j\leq n}\cA^{\circ}(\xi_j) \\
 \dTo^{\phi_f} &      & \dTo_{\phi_f}\\
 \Chat\setminus \phi_f\bigl(\cup_{1\leq j\leq n}\cA^{\circ}(\xi_j)\bigl) & \rTo^{F} & \Chat\setminus \phi_f\bigl(\cup_{1\leq j\leq n}\cA^{\circ}(\xi_j)\bigl) 
 \end{diagram}
 \eqno \text{D}1
 \]
 where $\Chat\setminus \cup_{1\leq j\leq n}\cA^{\circ}(\xi_j)$ is the complement of the union of marked \emph{immediate basins} of $f$.
 Moreover, $\cA_{F}(\infty)=\phi_f(\cup_{1\leq j\leq n}\cA(\xi_j))$, and it is the parabolic basin of $\infty$ for $F$.
 As above, for $1\leq j\leq n$, let us denote by $\cA(\chi_j)$ basins of marked superattracting fixed points $\chi_j$ of $g$.
 
 Similarly, by Theorem~\ref{Thm:Parabolic_Surgery_Newton}, there exists a homeomorphism $\phi_g$ such that the following diagram commutes.
 \[
 \begin{diagram}[heads=LaTeX,l>=3em]
 \Chat\setminus \cup_{1\leq j\leq n}\cA^{\circ}(\chi_j) &\rTo^{g} & \Chat\setminus \cup_{1\leq j\leq n}\cA^{\circ}(\chi_j) \\
 \dTo^{\phi_g} & & \dTo_{\phi_g}\\
 \Chat\setminus \phi_g\bigl(\cup_{1\leq j\leq n}\cA^{\circ}(\chi_j)\bigl) &\rTo^{G} & \Chat\setminus \phi_g\bigl(\cup_{1\leq j\leq n}\cA^{\circ}(\chi_j)\bigl) 
 \end{diagram}
 \eqno \text{D}2
 \]
 where $\Chat\setminus \cup_{1\leq j\leq n}\cA^{\circ}(\chi_j)$ is the complement of the union of marked \emph{immediate basins} of $g$.
 Moreover, $\cA_{G}(\infty)=\phi_g(\cup_{1\leq j\leq n}\cA(\chi_j))$ is the parabolic basin of $\infty$ for $G$. 
 
 Assume both surgeries are equivalent: $F\sim_H G$, i.e. there exists an affine map $M$ with the following commutative diagram.
 \[
 \begin{diagram}[heads=LaTeX,l>=3em]
 \Chat &\rTo^{F} & \Chat\\
 \dTo^{M} & & \dTo_{M}\\
 \Chat &\rTo^{G} & \Chat
 \end{diagram}
 \eqno \text{D}3
 \]
 By D3, we obtain $M(\cA_{F}(\infty))=\cA_{G}(\infty)$ and $M(\Chat\setminus \cA_{F}(\infty))=\Chat\setminus \cA_{G}(\infty)$. Moreover, the dynamical accesses of $\cA_{F}(\infty)$ for $F$ transform via $M$ to the dynamical accesses of $\cA_{G}(\infty)$ for $G$.
 From diagrams D1, D2, and D3, it follows that \[\phi^{-1}_g\circ M\circ \phi_f \circ f=g\circ \phi^{-1}_g\circ M \circ\phi_f\] on $\Chat\setminus \cup_{1\leq j\leq n}\cA(\xi_j)$. 
 The homeomorphism $$\psi^1=\phi^{-1}_g\circ M\circ \phi_f:\Chat\setminus \cup_{1\leq j\leq n}\cA^{\circ}(\xi_j)\to \Chat\setminus \cup_{1\leq j\leq n}\cA^{\circ}(\chi_j)$$ conjugates $f$ to $g$ in $\Chat\setminus \cup_{1\leq j\leq n}\cA^{\circ}(\xi_j)$, the complement of the union of marked immediate basins of $f$.
 
 We want to extend $\psi^1$ to $\mathbb{S}^2$ (topological 2-sphere) as a homeomorphism that is also a topological conjugacy between $f$ and $g$, and what is missing are the marked immediate basins $\cup_{1\leq j\leq n}\cA^{\circ}(\xi_j)$ of $f$. 
 To accomplish this, we use normalized Riemann maps (B\"ottcher coordinates) coming from Proposition \ref{Thm:Characterization_PCM}. 
 Let us sort the indices such that $\cA^{\circ}(\xi_j)$ and their counterparts $\cA^{\circ}(\chi_j)$ are cyclically ordered at $\infty$ for $1\leq j\leq n$. For every $1\leq j\leq n$, by Proposition~\ref{Thm:Characterization_PCM} there exists a Riemann map 
 $\psi_{j_f}:(\cA^{\circ}(\xi_j),\xi_j)\to (\D,0)$ such that $\psi_{j_f}\circ f \circ \psi^{-1}_{j_f}(z)=z^{k_j}$, where $k_j=\deg(f,\xi_j)$. We have $k_j-1$ choices for $\psi_{j_f}$. 
 
 Let $R(t)=\{r e^{2\pi i t}: 0\leq r\leq 1\}$ be the \emph{radial line at angle} $t$ in $\D$. We fix some choice of a Riemann map $\psi_{j_f}$ and define $R_j(t)=\psi^{-1}_{j_f}(R(t))$, \emph{the ray of angle} $t$ in $\cA(\xi_j)$. 
 The radial lines $R(t)$ at angles $t\in\{0,\frac{1}{k_j-1},\ldots,\frac{k_j-2}{k_j-1}\}$ are fixed by $z\mapsto z^{k_j}$. Hence, the rays in $\cA^{\circ}(\xi_j)$ at those angles are fixed by $f$, and the rays define all invariant accesses to $\infty$ within the immediate basin. Once we label each access, the different choices of Riemann maps $\psi_{j_f}$ do nothing but cyclically permute (a shift) the labels of accesses. Note that accesses do not depend on the choice of a Riemann map. Let us choose the Riemann map $\psi_{j_f}$ such that the rays at angles $0,\frac{1}{k_j-1},\ldots,\frac{k_j-2}{k_j-1}$ in $\cA^{\circ}(\xi_j)$ are ordered in a counter-clockwise direction, and the $0$ ray being the one which is marked. By \cite{TY}, the Julia set of $f$ is locally connected as $f$ is geometrically finite and its Julia set is connected, so the boundary of every Fatou component of $f$ is locally connected, hence, every ray lands by Carath\'eodory's theorem. Also note that every $f$-invariant ray lands at $\infty\in \partial \cA^{\circ}(\xi_j)$.
 
 We have the corresponding construction for $g$ plane: the Riemann maps 
 \[\phi_{j_g}(\cA^{\circ}(\chi_j),\chi_j)\to (\D,0)\] such that $\psi_{j_g}\circ g \circ \psi^{-1}_{j_g}(z)=z^{k_j}$,
 where $k_j=\deg(f,\xi_j)=\deg(g,\chi_j)$ for all $1\le j\le n$. We normalize these Riemann maps of marked immediate basins of $g$ in the same ordering used for $f$. We define the rays in immediate basins $\cA^{\circ}(\chi_j)$ in $g$ plane; as above, every ray lands.	 
 
 We construct conjugating maps between corresponding marked immediate basins of $f$ and $g$. For every $1\le j\le n$, consider the map \[\psi^2_j:=\phi^{-1}_{j_g}\circ \phi_{j_f}:\cA^{\circ}(\xi_j)\to \cA^{\circ}(\chi_j),\] which is conformal, and it conjugates $f$ to $g$ on its domain of definition as the following diagrams commute.
 \[\begin{diagram}[heads=LaTeX,l>=3.5em]
 \cA^{\circ}(\xi_j)  	&\rTo^{f} & \cA^{\circ}(\xi_j) \\
 \dTo^{\psi_{j_f}}		& 		& \dTo_{\psi_{j_f}}\\
 \D 		 	&\rTo^{z\mapsto z^{k_j}} 	& \D\\
 \uTo^{\psi_{j_g}}		& 			& \uTo_{\psi_{j_g}}\\
 \cA^{\circ}(\chi_j) 	&\rTo^{g} & \cA^{\circ}(\chi_j)
 \end{diagram}\]
 
 It is now natural to check if both $\psi^1$ and $\psi^2_j$ match up on $\partial \cA^{\circ}(\xi_j)$ for all $1\le j\le n$.
 For all $1\le j\le n$, we define an equivalence relation on $\S^1$, which denotes the unit circle, for $\psi_{j_f}$ (and $\psi_{j_g}$), where the equivalence classes of rays (identified by angles) contain those rays landing at a common point. Alternatively, since the inverse of $\psi_{j_f}$ (correspondingly the inverse of $\psi_{j_g}$) has a continuous extension to the closed unit disk by Carath\'eodory's theorem, every equivalence class consists of points of $\S^1$ that are mapped to the same point under the continuous extension of the inverse of $\psi_{j_f}$ (correspondingly the continuous extension of the inverse of $\psi_{j_g}$). 
 
 All $f$-invariant rays land at $\infty$, and thus these belong to the same class. All iterated pre-fixed rays (the iterated image is an invariant ray) split into distinct equivalent classes.
 It is clear that our equivalence relation is generated by the closure of the equivalence relation defined by $f$-invariant rays and their iterated preimages. By the normalized Riemann maps, $\psi_{j_f}$ for $f$, and $\psi_{j_g}$ for $g$, we obtain the same equivalence relation for both $f$ and $g$.
 Indeed, the map $\psi^1$ sends bijectively the iterated preimages of $\infty$ in the $f$ plane to the corresponding iterated preimages of $\infty$ in the $g$ plane. Thus $\psi^2_j$ extends continuously to the closure $\overline{\cA^{\circ}(\xi_j)}$. Since $\infty\in J(f)$ therefore iterated preimages of $\infty$ are dense in $\partial\cA^{\circ}(\xi_j)$, hence for every point $z\in \partial\cA^{\circ}(\xi_j)$ the equivalence class of rays landing at $z$ is the limit of classes of rays landing at iterated preimages of $\infty$ in $\partial\cA^{\circ}(\xi_j)$. Moreover, the extension (denote again by $\psi^2_j$) coincides with $\psi^1$ on the iterated preimages of $\infty$. By construction the maps $\psi^1$ and $\psi^2_j$ agree on a dense subset of their common domains of definition; namely, on the point at $\infty$ and its iterated preimages in $\partial\cA^{\circ}(\xi_j)$. It follows that $\psi^1$ and $\psi^2_j$ coincide everywhere on their common domains of definition. Hence the orientation preserving homeomorphism 
 \[\psi = \left \{ 
 \begin{array}{ll}
 \psi^1(z), & z\in \mathbb{S}^2\setminus \cup_{1\leq j\leq n}\cA^{\circ}(\xi_j),\\
 \psi^2_j(z), & z\in \cA^{\circ}(\xi_j), \ \text{for}\ {1\leq j\leq n}
 \end{array} \right. \]
 conjugates $f$ to $g$ in $\mathbb{S}^2$.
 
 Finally, we invoke the rigidity part of Thurston's theorem on the characterization of branched coverings \cite{DH2} (in fact we need to apply a result from \cite{BCT} where we add some extra marked points to the postcritical set, for us, it is a point at $\infty$) to degree $d\geq 3$ functions $f$ and $g$ deducing the existence of $L$, a conformal conjugacy $L\circ f=g\circ L$\footnote{Alternatively, by the proof structure of Chapter 6 of \cite{DH1} as well as of the next section of this paper, surjectivity of parabolic surgery, we can construct the conformal conjugacy by hand by keeping the conformal conjugacy at small disc neighborhoods of superattracting periodic points of $f$ compactly contained in their immediate basins and interpolating this conformal map to a quasiconformal map $\phi_0$ of the sphere. Next, we keep taking lifts and obtain a sequence of quasiconformal maps with the same complex dilatation. We only need to require for all $m>0$, $\phi_m\circ f=g\circ \phi_{m+1}$ and $\phi_m=\phi_0$ in a small disc neighborhood of some superattracting fixed point of $f$ so that we fix a base point from this domain to define the lifts. The sequence $\{\phi_m\}_{m\ge 0}$ has a convergent subsequence. Let $\phi$ be its limit. It is clear that $\phi$ is a conformal map of $\Chat$ since the domain where $\phi_m$ are not conformal shrinks to the Julia set of $f$. The claim follows since the Julia set of $f$ has measure zero. In the domain we have $\phi=\phi_0$. We have constructed the initial map to satisfy $\phi_0\circ f=g\circ \phi_0$ in the domain, hence $\phi\circ f=g\circ \phi$ by the identity principle of holomorphic functions.}. Moreover, $L$ sends the marked fixed points of $\Delta^+_n(f)$ to those of $\Delta^{\prime +}_n(g)$, hence all of the marked channel diagram: $L(\Delta^+_n(f))=\Delta^{\prime +}_n(g)$. 
\end{proof}
 
 \section{Surjectivity of parabolic surgery}
 
 For a PCM Newton map with the parabolic fixed point at $\infty$, G. Cui's plumbing surgery perturbes its parabolic domains into attracting domains, thus producing a sub-hyperbolic rational map, which then is turned to the postcritically finite Newton map of a polynomial with its marked accesses to $\infty$. The latter is done by the standard surgery: changing multipliers at the attracting fixed points and in preimage components of it if there are critical points. Then, by parabolic surgery, we do the reverse of this process: for the obtained postcritically finite Newton map of a polynomial, we change its repelling fixed point at $\infty$ into a parabolic fixed point, thus obtaining a rational map, which turns out to be a PCM Newton map. We show that the latter is affine conjugate to the postcritically minimal Newton map we started with.
 \begin{theo}[Surjectivity of parabolic surgery]
 	\label{Thm:Surjectivity_Haissinsky}	
 	For every pair of natural numbers $d\ge3$ and $1\le n\le d$, parabolic surgery induces a (natural) surjective mapping $\cF_n$ from the quotient space $\cN^{+,n}_\text{pcf}(d)/\sim_H$ onto the space of affine conjugacy classes of PCM Newton maps in $\cN_\text{pcm}(d-n,n)$.
 \end{theo}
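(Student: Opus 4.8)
We must show that $\cF_n$ is onto; its well-definedness on the quotient is part of Theorem~\ref{Thm:Injectivity_Haissinsky}. The plan is to run, from an arbitrary PCM Newton map, the reverse of the construction sketched before the statement, and then to verify that it inverts parabolic surgery up to affine conjugacy. Fix $g=N_{pe^q}\in\cN_\text{pcm}(d-n,n)$; its only parabolic cycle is the fixed point $X=\{\infty\}$, of multiplier $+1$ and multiplicity $n+1$, so the parabolic basin of $\infty$ has exactly $n$ immediate basins. Being PCM, $g$ is geometrically finite, so Cui's Theorem~\ref{Thm:Cui} applies and yields a geometrically finite sub-hyperbolic rational map $f_1$, a family of quasiconformal conjugacies with $f_t\to g$, and a quotient semiconjugacy $\phi$ with $\phi\circ f_1=g\circ\phi$ that restricts to a homeomorphism $J(f_1)\to J(g)$ and is conformal on every Fatou component of $f_1$ that $\phi$ maps to a non-parabolic Fatou domain. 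By item~(\ref{item-Cui-Thm}) the $n$ immediate basins of $\infty$ of $g$ become attracting domains of $f_1$, so $\infty$ turns into a repelling fixed point and $f_1$ acquires $n$ new attracting fixed points, while the $d-n$ superattracting fixed points carried by the roots of $p$, together with their multipliers and critical orbit relations, are preserved by the conformality of $\phi$ off the parabolic basin; by postcritical minimality the critical orbit relations inside $g$'s parabolic basin are the minimal ones, hence transport to minimal relations inside the new attracting basins. A standard quasiconformal surgery — as in the remark following Theorem~\ref{Thm:Cui}, using item~(\ref{item-Cui-Thm}) for the quasiconformality of $\phi$ on compacta of the pull-backs of the parabolic domains — that makes the $n$ new attracting fixed points superattracting and straightens the multipliers along their strictly preperiodic critical orbits turns $f_1$ into the postcritically finite Newton map $N_P$ of a polynomial $P$ of degree $d$; after the normalization of Definition~\ref{Space:Newton} we take $N_P\in\cN_\text{pcf}(d)$.

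Next I install a marking. Each of the $n$ immediate basins of $\infty$ of $g$ carries a distinguished \emph{dynamical} access to $\infty$ (see the discussion after Definition~\ref{Def:Access}). I would show that, transported through the two surgeries above, each such access becomes an $N_P$-invariant internal ray in one of the $n$ immediate basins of the new superattracting fixed points; marking exactly these $n$ rays yields a marked channel diagram $\Delta^+_n(N_P)$, hence a point $(N_P,\Delta^+_n(N_P))\in\cN^{+,n}_\text{pcf}(d)$. Applying parabolic surgery (Theorem~\ref{Thm:Parabolic_Surgery_Newton}) to $N_P$ through this marking produces a homeomorphism $\phi_P$ and a PCM Newton map $N_{\tilde P e^{\tilde Q}}$ of degree $d$ with $\deg\tilde P=d-n$ and $\deg\tilde Q=n$; after normalizing, this is an element of $\cN_\text{pcm}(d-n,n)$ and $\cF_n([N_P,\Delta^+_n(N_P)])=[N_{\tilde P e^{\tilde Q}}]$. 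By part~(d) of Theorem~\ref{Thm:Parabolic_Surgery_Newton} the dynamical accesses of the parabolic basin of $N_{\tilde P e^{\tilde Q}}$ are the images of the $n$ marked rays, hence, retracing the bookkeeping, correspond to the dynamical accesses of $g$.

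It remains to show $N_{\tilde P e^{\tilde Q}}$ is affine conjugate to $g$, the heart of the matter, argued largely in parallel with the proof of Theorem~\ref{Thm:Injectivity_Haissinsky}. Composing $\phi^{-1}$, the straightening map of the multiplier surgery, and $\phi_P$ yields an orientation-preserving conjugacy between $g$ and $N_{\tilde P e^{\tilde Q}}$ on a neighborhood of $J(g)$ and on all Fatou components of $g$ outside the parabolic basin of $\infty$, conformal there. On the immediate basins $U$ of $\infty$ I would use the parabolic-Blaschke normal form of Proposition~\ref{Thm:Characterization_PCM}: a Riemann map $U\to\D$ sending the center of $U$ to $0$ and intertwining $g|_U$, respectively $N_{\tilde P e^{\tilde Q}}|_U$, with the parabolic Blaschke product $P_k$, normalized so that the distinguished dynamical access of $U$ corresponds to a fixed curve landing at the parabolic point of $P_k$; the resulting conformal maps between corresponding immediate basins of $\infty$ conjugate $g$ to $N_{\tilde P e^{\tilde Q}}$, and on the strictly preperiodic components of the parabolic basin one obtains conformal conjugacies by equivariant lifting. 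Since both Julia sets are locally connected (geometric finiteness with connected Julia set, \cite{TY}), all relevant internal rays land, these pieces extend continuously to the boundaries, and on the dense set of iterated preimages of $\infty$ in those boundaries they agree with the exterior conjugacy (this is where the access normalization enters); hence everything glues to a global orientation-preserving topological conjugacy $\psi$ between $g$ and $N_{\tilde P e^{\tilde Q}}$ that is holomorphic on all of $F(g)$. As the Julia set of a geometrically finite rational map with $J\neq\Chat$ is conformally removable (equivalently, invoking the rigidity of geometrically finite rational maps of \cite{CT1,CT2}), $\psi$ is a M\"obius transformation; since it fixes $\infty$ it is affine. Therefore $g$ is affine conjugate to $N_{\tilde P e^{\tilde Q}}=\cF_n([N_P,\Delta^+_n(N_P)])$, so $[g]$ lies in the image of $\cF_n$ and $\cF_n$ is surjective.

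I expect the real obstacle to be the access bookkeeping combined with the gluing step: one must verify that the dynamical accesses of $g$ survive, as well-defined homotopy classes, through both Cui's plumbing and the multiplier straightening; that marking their images in $N_P$ produces, via parabolic surgery, a map whose parabolic basin of $\infty$ matches $g$'s access by access; and that the normal-form coordinates on the rebuilt parabolic basin can be chosen so that the boundary values of the interior conjugacies agree with those of the exterior conjugacy on the dense set of iterated preimages of $\infty$. Everything else is an application of Cui's Theorem~\ref{Thm:Cui}, the normal forms of Proposition~\ref{Thm:Characterization_PCM}, standard quasiconformal surgery, and conformal removability of Julia sets of geometrically finite maps.
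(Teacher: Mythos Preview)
Your overall strategy matches the paper's exactly: apply Cui's plumbing (Theorem~\ref{Thm:Cui}) to $g=N_{pe^q}$, straighten the new attracting fixed points to superattracting to obtain a PCF Newton map $N_P$, transport the dynamical accesses of $g$ to a marking $\Delta^+_n(N_P)$, apply parabolic surgery (Theorem~\ref{Thm:Parabolic_Surgery_Newton}) to produce $N_{\tilde Pe^{\tilde Q}}$, and then prove $g$ and $N_{\tilde Pe^{\tilde Q}}$ are affine conjugate. Parts~A--C of the paper carry out precisely what you describe, including the gluing of normal-form Riemann maps on the parabolic basin with the exterior conjugacy on $\Chat\setminus\cA(\infty)$ via the equivalence-relation argument on $\S^1$ and the density of iterated preimages of $\infty$; the paper also proves the extra continuity claim needed to know the glued map is a global homeomorphism (your sketch elides this, but it is routine).

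The real divergence is in the last step. You finish by asserting that the Julia set of a geometrically finite rational map with $J\neq\Chat$ is conformally removable, so that a homeomorphism conformal on $F(g)$ must be M\"obius; alternatively you invoke rigidity from \cite{CT1,CT2}. Neither shortcut is available here. Parabolic basins are \emph{not} John domains (the boundary has cusps at the parabolic point and all its preimages), so the Jones--Smirnov route to removability does not apply directly, and there is no blanket removability theorem for Julia sets of geometrically finite maps with parabolics that you can simply cite. The Cui--Tan rigidity route fails for a reason the paper flags explicitly in the footnote opening Part~D: the topological conjugacy $\Psi$ of Part~C is \emph{not} a c-equivalence in the sense of \cite{CT2}, because near the parabolic point it is only a homeomorphism, not quasiconformal. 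This is exactly why the paper does not stop at Part~C.

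What the paper does instead (Part~D) is worth knowing. It uses $\Psi$ only on attracting flowers at $\infty$ and on small analytic disks around the superattracting periodic points, upgrades the flower conjugacy to a local \emph{quasiconformal} conjugacy via Lemma~\ref{Lem:Partial_Local}, and then interpolates (Proposition~\ref{Prop:Interpolation}) across the remaining quasiannuli to obtain a single global quasiconformal map $\psi_0$ that conjugates the two Newton maps on a forward-invariant open set $\Omega^1$. Iterated lifts $\psi_m$ (Lemma~\ref{Lem:Lifting}) have the same dilatation bound and are conformal on $N_{p_1e^{q_1}}^{-m}(\Omega^1)$; a subsequential limit $\psi_\infty$ is quasiconformal, conformal off the measure-zero Julia set, hence M\"obius, and agrees with a conjugacy on an open set, hence globally. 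This pullback argument replaces removability by the much weaker input that $J(g)$ has Lebesgue measure zero. If you want to keep your cleaner endgame, you would need to supply an independent proof that $J(g)$ is conformally removable in the presence of the parabolic fixed point; otherwise, your Part~D has a genuine gap.
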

\begin{proof}
 The proof is involved. \emph{Idea of proof.} We apply Cui's plumbing surgery to a PCM Newton map from $\cN_\text{pcm}(d-n,n)$ to perturb its parabolic fixed point at $\infty$. The resulting rational map is then converted to the postcritically finite Newton map of a polynomial in $\cN^{+,n}_\text{pcf}(d)$ via intermediate surgery. We then apply parabolic surgery to the last Newton map of a polynomial to produce a PCM Newton map in $\cN_\text{pcm}(d-n,n)$. We show that the PCM Newton map we took from $\cN_\text{pcm}(d-n,n)$ and the result of the parabolic surgery are affine conjugate to each other. Thus, proving that parabolic surgery induces a surjective mapping from the space $\cN^{+,n}_\text{pcf}(d)/\sim_H$ to the space of affine conjugacy classes in $\cN_\text{pcm}(d-n,n)$. The proof is split into four major parts, Part A-Part D. For each part, let us provide more details of the proof idea in the following.
 
  {\em Part A - Perturbation of parabolic fixed point}. Apply Cui plumbing surgery (Theorem~\ref{Thm:Cui}) to a PCM Newton map $N_{p_1 e^{q_1}}\in \cN_\text{pcm}(d-n,n)$ to obtain a rational map $f_1$ and a quotient map $\phi$ such that $\phi\circ f_1=N_{p_1 e^{q_1}}\circ \phi$. The injectivity of $\phi$ is broken only in Fatou components of $f_1$ that map to parabolic domains of $N_{p_1 e^{q_1}}$, in particular, $\phi$ is a homeomorphism when restricted to $J(f_1)$. Next, we change $f_1$ in its attracting basins such that the result of this intermediate surgery produces a postcritically finite Newton map, denote it by $N_p$. We have a choice for $f_1$ but all choices produce the same $N_p$, thus we obtain a unique and canonical mapping.
 	
 {\em Part B - Parabolic surgery application}. We apply parabolic surgery to $N_p$ of Part A, with its corresponding marked channel diagram, which is uniquely obtained from $N_{p_1 e^{q_1}}$. Denote by $N_{p_2 e^{q_2}}$ the result of parabolic surgery. 
 	
 {\em Part C - Construction of topological conjugacy}. We construct a \emph{topological} conjugacy $\Psi$ between $N_{p_1 e^{q_1}}$ and $N_{p_2 e^{q_2}}$ by cutting parabolic basins where the conjugacy is broken and gluing the conjugacy coming from the normalized Riemann maps. This part is only needed to make sure that we have correct choices of Riemann maps in parabolic basins, as these are not unique. Alternatively, it is also possible to skip this part and make this choice during the next part, where we again cut those domains to obtain a global conformal conjugacy.
 	
 {\em Part D - Construction of conformal conjugacy}. Using the topological conjugacy of Part C, which is locally conformal on the Fatou set of $N_{p_1 e^{q_1}}$, and giving up the conjugacy we had, we construct a sequence of quasiconformal homeomorphisms that are conformal conjugacies between the Newton maps at petals of parabolic fixed point and in neighborhoods of superattracting basins. Every element of the sequence is the lift of the previous element, and the domain of conjugacy increases, eventually filling the whole Fatou set. Finally, by extracting a convergent subsequence, we obtain a \emph{conformal} conjugacy between $N_{p_1 e^{q_1}}$ and $N_{p_2 e^{q_2}}$.

 \vspace*{.1cm}
 
 \subsection*{Part A - Perturbation of parabolic fixed point.} Fix $d\ge 3$ and $1 \le n \le d$. Let $N_{p_1 e^{q_1}}\in \cN_\text{pcm}(d-n,n)$ be a postcritically minimal Newton map. We invoke Cui plumbing surgery (Theorem~\ref{Thm:Cui}) to deduce a sub-hyperbolic rational map $f_1$ and a quotient map $\phi$ such that $\phi\circ f_1=N_{p_1 e^{q_1}}\circ \phi$, i.e. the following diagram is commutative.
 \[ \begin{diagram}[heads=LaTeX,l>=3em]
 \Chat        &\rTo^{f_1}   & \Chat\\
 \dTo^{\phi} &         & \dTo_{\phi}\\
 \Chat        &\rTo^{N_{p_1 e^{q_1}}}   & \Chat
 \end{diagram}\]
 Moreover, when restricted to $J(f_1)$, $\phi$ is a homeomorphism from $J(f_1)$ onto $J(N_{p_1 e^{q_1}})$. Now we study essential properties of $f_1$ and $\phi$. Without loss of generality, we can assume that $\infty$ is a non-attracting fixed point of $f_1$, after M\"obius conjugation if necessary. Thus, $\infty\in J(f_1)$. Then we obtain $\phi(\infty)= \infty $ since $\phi(\infty)=N_{p_1 e^{q_1}}(\phi(\infty))$ and $\infty$ is the only fixed point of $N_{p_1 e^{q_1}}$ on its Julia set.
 For the Newton map $N_{p_1 e^{q_1}}$ its parabolic cycle consists only of a point at $\infty$. For every immediate basin $U$ of $\infty$, the map $\phi$ can be obtained as a quasiconformal map on any domain compactly contained in $\phi^{-1}(U)$, then $\phi^{-1}$ sends a critical point of $N_{p_1 e^{q_1}}$ in $U$ to a critical point of $f_1$ in $\phi^{-1}(U)$. Let $c\in U$ be the center of $U$, a unique critical point of $N_{p_1 e^{q_1}}$ in $U$. Since $\phi$ is a homeomorphism restricted to the Julia set, we have $\deg(f_1,\phi^{-1}(c))=\deg(N_{p_1 e^{q_1}},c)$, thus there is no other critical point of $f_1$ in $\phi^{-1}(U)$. Indeed, let $K$ be a neighborhood of $\phi^{-1}(c)$ compactly contained in $\phi^{-1}(U)$; by the theorem, we choose $\phi$ such that it is quasiconformal on $K$, thus $\phi^{-1}(c)$ is a single point, moreover, it is a critical point of $f_1$.
 The following diagram commutes,
 \[\begin{diagram}[heads=LaTeX,l>=3em]
 f_1^{-1}(K)        &\rTo^{f_1}   & K\\
 \dTo^{\phi} &      & \dTo_{\phi}\\
 \phi(f_1^{-1}(K)) &\rTo^{N_{p_1 e^{q_1}}}   & \phi(K)
 \end{diagram}\]
 hence $\phi$ is quasiconformal on $f_1^{-1}(K)$. Induction shows that $\phi$ is quasiconformal in all of the iterated preimages of $K$.
 Now assume $c_1$ is a critical point of $N_{p_1 e^{q_1}}$ such that $N_{p_1 e^{q_1}}^{\circ l}(c_1)=c\in U$ for the minimal $l\ge1$, i.e. $l$ is the preperiod of the component containing $c_1$. Since $\phi$ is a homeomorphism with the above commutative diagram, it follows that after iteratively applying the conjugacy for iterative preimages of $K$, we obtain that $\phi^{-1}(c_1)$ is a critical point of $f_1$, and $f_1^{\circ l}(\phi^{-1}(c_1))=\phi^{-1}(c)$ for the same minimal $l\ge 1$. Moreover, since $\phi$ is a homeomorphism on the Julia set we have $\deg(f_1,\phi^{-1}(c_1))=\deg(N_{p_1 e^{q_1}},c_1)$. Furthermore, $\phi^{-1}(c_1)$ is the only critical points of $f_1$ in the Fatou component containing $\phi^{-1}(c_1)$. 
 
 Similarly, by induction we shall show that $\phi$ is conformal in every $\phi^{-1}(U)$, where $U$ is a Fatou component of $N_{p_1 e^{q_1}}$ that is not a parabolic domain. These types of components could only be components of basins of superattracting periodic points (including fixed) of $N_{p_1 e^{q_1}}$. If $U$ is a superattracting immediate basin of $N_{p_1 e^{q_1}}$ then by Cui plumbing theorem (Theorem~\ref{Thm:Cui}) $\phi^{-1}(U)$ is an immediate basin of a superattracting periodic point of $f_1$ and $\phi$ is conformal in $\phi^{-1}(U)$, therefore $\phi^{-1}$ sends superattracting periodic points of $N_{p_1 e^{q_1}}$ to those of $f_1$. Let $V$ be a component of $N_{p_1 e^{q_1}}^{-1}(U)$ other than $U$. We have the following commutative diagram,
 
 \[\begin{diagram}[heads=LaTeX,l>=3em]
 \phi^{-1}(V)     &\rTo^{f_1}  	 & \phi^{-1}(U)\\
 \dTo^{\phi} &        		 & \dTo_{\phi}\\
 V     &\rTo^{N_{p_1 e^{q_1}}}		 & U
 \end{diagram}\]
 hence $\phi$ is conformal in $\phi^{-1}(V)$. By induction, $\phi$ is conformal in $\phi^{-1}\circ N_{p_1 e^{q_1}}^{\circ(- k)}(U)$ for all $k\ge 1$. What we have that, for every component of $F(f_1)$ that is preserved by the conjugacy $\phi$, the critical orbits terminate in finite time.
 
 We have to mention that, in all immediate basins of $f_1$ that are counterparts to the parabolic domains of $N_{p_1 e^{q_1}}$, we can change the multipliers to zero (see \cite[Chapter~4.2]{BF} and \cite[Theorem~5.1]{CG}; compare with \cite[Lemma 3.8]{Ma}). By carefully checking the process of the latter, we can achieve that there is a single grant orbit in that basin, then the resulting function is a postcritically finite Newton map, denote it by $N_p$. In this process, we have that the new rational map$N_p$ and the old $f_1$ are conjugate except in small neighborhoods of attracting fixed points of $f_1$. This intermediate surgery produces a quasiconformal homeomorphism $\phi_1$ such that the following diagram is commutative,
 \[ \begin{diagram}[heads=LaTeX,eqno,l>=3em]
 \Chat\setminus \phi_1^{-1}(A)    &\rTo^{N_p}   & \Chat\setminus \phi_1^{-1}(A)\\
 \dTo^{\phi_1}					  &        	    & \dTo_{\phi_1}\\
 \Chat\setminus A       		  &\rTo^{f_1}   & \Chat\setminus A
 \end{diagram}
 \]
 where $A$ is the union of all basins that are \emph{not} affected by the intermediate surgery. Moreover, $\phi_1$ is conformal in the interior of $\Chat\setminus \phi_1^{-1}(A)$. Let us summarize what we have obtained so far. 
 \begin{itemize}
 	\item The quotient map $\phi$, when restricted to the Julia set of $f_1$ is a topological conjugacy between the Julia sets of $f_1$ and $N_{p_1 e^{q_1}}$. Moreover, $\phi$ is conformal (conjugacy) on the Fatou components of $f_1$. These Fatou components are counterpart to the non-parabolic Fatou domains of $N_{p_1 e^{q_1}}$.
 	\item The quasiconformal homeomorphism $\phi_1$ is a conjugacy between $f_1$ and $N_p$ on the complement of the union of disk neighborhoods of (Cui surgery created) attracting fixed points of $f_1$ and it is conformal in the rest of the basins, including all basins of superattracting periodic points of $f_1$. Thus, $\phi\circ\phi_1$, a quotient map, is a topological conjugacy between the Julia sets of $N_p$ and $N_{p_1 e^{q_1}}$, and it is a conformal map where $\phi$ is conformal.
 \end{itemize}
 Normalize $N_p$ to make the polynomial $p$ monic, centered, and having a root at $z=1$, so that it belongs to $\cN_{\text{pcf}}(d)$. We \emph{mark} the basins of $N_p$ that are created by Cui plumbing surgery. We also need \emph{marked} accesses in every marked basin. By \cite[Proposition 2.3]{Ma} (see also \cite[Corollary C]{BFJK17}), every parabolic immediate basin of $N_{p_1 e^{q_1}}$ has its unique dynamical access. Note that since $\phi^{-1}$ restricted to the Julia set of $N_{p_1 e^{q_1}}$ homeomorphically sends boundaries of its parabolic basins to boundaries of attracting basins of $f_1$, all (dynamical) accesses of former transform to all (marked) accesses of the latter via $\phi$ (and further via $\phi^{-1}_1$ to $N_p$). Thus, we have marked accesses of $N_p$ in its corresponding marked basins that are counterparts to parabolic basins of $N_{p_1 e^{q_1}}$.
 
 \subsection*{Part B - Parabolic surgery application.}  For ${1\leq j\leq n}$, let us denote by $\cA(\xi_j)$ marked basins. We also have marked access in each of $\cA(\xi_j)$. We apply parabolic surgery (Theorem~\ref{Thm:Parabolic_Surgery_Newton}) to $N_p$ through those marked basins and accesses deducing a homeomorphism $\phi_2$ and a postcritically \emph{minimal} Newton map $N_{p_2 e^{q_2}}$ such that:
 \begin{itemize}
 	\item $\phi_2$ is conformal in every Fatou component of $N_p$ that is not marked,
 	\item $\phi_2\circ N_p(z) =N_{p_2 e^{q_2}}\circ \phi_2(z)$ for all $z\notin \bigcup_{1\leq j\leq n}\cA^{\circ}(\xi_j)$ i.e. the following diagram commutes.
 	\[	\begin{diagram}[heads=LaTeX,l>=3.5em]
 	\Chat\setminus \cup_{1\leq j\leq n}\cA^{\circ}(\xi_j)  		 &\rTo^{N_p}   				& \Chat\setminus \cup_{1\leq j\leq n}\cA^{\circ}(\xi_j)\\
 	\dTo^{\phi_2}												 &        	  				& \dTo_{\phi_2}\\
 	\Chat\setminus\phi_2(\cup_{1\leq j\leq n}\cA^{\circ}(\xi_j)) &\rTo^{N_{p_2 e^{q_2}}}    & \Chat\setminus \phi_2(\cup_{1\leq j\leq n}\cA^{\circ}(\xi_j))
 	\end{diagram}\]
 \end{itemize}
 
 \subsection*{Part C - Construction of topological conjugacy.} We shall construct a topological conjugacy between $N_{p_1 e^{q_1}}$ and $N_{p_2 e^{q_2}}$ that is conformal in the Fatou set of $N_{p_1 e^{q_1}}$. By above constructions it follows that the map $\Psi=\phi_2\circ\phi_1^{-1}\circ \phi^{-1}$ is a conjugacy between $N_{p_1 e^{q_1}}$ and $N_{p_2 e^{q_2}}$ on the complement of parabolic basins of $N_{p_1 e^{q_1}}$. Moreover, $\Psi$ is conformal in the basins of superattracting periodic points (including fixed) of $N_{p_1 e^{q_1}}$. We want to extend this conjugacy to the parabolic basin $\cA^1(\infty)$ as well. We construct our topological conjugacy by gluing the Riemann maps of corresponding parabolic components.
 
 For every ${1\leq j\leq n}$, let $\cA_j^{\circ1}$ be an immediate basin of the parabolic fixed point of $N_{p_1 e^{q_1}}$, and let $c_j^1$ be the center of $\cA_j^{\circ1}$. For every ${1\leq j\leq n}$, since $\Psi(\partial\cA_j^{\circ1})$ is $N_{p_2 e^{q_2}}$-invariant, it is the boundary of exactly one parabolic component of $N_{p_2 e^{q_2}}$, denote it by $\cA_j^{\circ2}$. Let $c_j^2$ be the only critical point in $\cA_j^{\circ2}$, the center of $\cA_j^{\circ2}$. Let $\psi_j^{\circ1}:\cA_j^{\circ1}\to\D$ and $\psi_j^2:\cA_j^{\circ2}\to\D$ be the corresponding uniquely defined Riemann maps sending the critical points $c_j^1$ and $c_j^2$ to the origin and the fixed point at $\infty$ to $z=1$ (as the parabolic points are accessible), moreover, we have $k_j=\deg(N_{p_1 e^{q_1}},c_j^1)=\deg(N_{p_2 e^{q_2}},c_j^2)$.\\ The following diagrams commute,
 \[	\begin{diagram}[heads=LaTeX,l>=3.5em]
 \cA_j^{\circ1}    	&\rTo^{N_{p_1 e^{q_1}}} & \cA_j^{\circ1} \\
 \dTo^{\psi_j^1}		&        	   			& \dTo_{\psi_j^1}\\
 \D       		 	&\rTo^{P_{k_j}}   		& \D\\
 \uTo^{\psi_j^2}		&        	   			& \uTo_{\psi_j^2}\\
 \cA_j^{\circ2}   	&\rTo^{N_{p_2 e^{q_2}}} & \cA_j^{\circ2}
 \end{diagram}\]
 where $P_{k_j}(z)=(z^{k_j}+a_j)/(1+a_j z^{k_j})$ with $a_j=(k_j-1)/(k_j+1)$ is the degree $k_j$ parabolic Blaschke product of $\D$. Note that under these normalizations, the marked access for both immediate basins are mapped via the Riemann maps to the same dynamical access associated to the invariant ray $(0,1)$ for $P_{k_j}$. For every ${1\leq j\leq n}$, the composition $\psi_j^2 \circ(\psi_j^1)^{-1}:\cA_j^{\circ1}\to\cA_j^{\circ2}$ is a conformal conjugacy on $\cA_j^{\circ1}$ between $N_{p_1 e^{q_1}}$ and $N_{p_2 e^{q_2}}$. 
 
 By Carath\'eodory's theorem, the inverses of both maps $\psi_j^1$ and $\psi_j^2$ extend to the boundary of the unit disk. We define an equivalence relation on the unit circle $\S^1$ induced by the extension: $x\sim y\in \S^1$ if and only if both are mapped to the same point on the boundary by the inverse of $\psi_j^1$. Similarly, we define an equivalence relation for the inverse map of $\psi_j^2$ in another copy of the unit circle $\S^1$. We shall show that these two maps define the same equivalence relation on $\S^1$. Indeed, we have $k_j+1$ fixed points of $P_{k_j}$, of which $k_j-2$ are distinct repelling fixed points, and a triple fixed point (a double parabolic) at $z=1$. In total there are $k_j-1$ invariant accesses, all of which correspond to invariant accesses to $\infty$ both immediate basins $\cA_j^{\circ1}$ and $\cA_j^{\circ2}$. By definition of the equivalence relation, we identify all fixed points $P_{k_j}$ since they all map to $\infty$ under the inverse map. Now we take preimages of a given fixed point of $P_{k_j}$. Similarly, in $\cA_j^{\circ1}$ we take the preimages of $\infty$ by $N_{p_1 e^{q_1}}$. The Newton map is locally injective away from its critical points, the invariant rays (accesses) to $\infty$ have preimages which land at the poles in $\partial\cA_j^{\circ1}$, one for each (non-homotopic rays) accesses to $\infty$. This is transported by the Riemann map $\psi_j^1$ to the unit disk and we identify preimages of fixed points according to the rules as in $\cA_j^{\circ1}$. This gives us $k_j-1$ distinct classes of identifications on $\S^1$, one for each corresponding pole other than $\infty$ of $N_{p_1 e^{q_1}}$ in $\partial\cA_j^{\circ1}$. Continuing this process, we split iterated preimages of all fixed points of $P_{k_j}$ in $\S^1$ into equivalence classes coming from iterated preimages of $z=1$ that correspond to the iterated preimages of $\infty$ on $\partial\cA_j^{\circ1}$. We take the closure of this equivalence relation. Since the above diagram commutes, the closed equivalent relations for $\psi_j^1$ and $\psi_j^2$ are the same. 
 
 Thus, the map $\psi_j^2 \circ(\psi_j^1)^{-1}:\cA_j^{\circ1}\to\cA_j^{\circ2}$ extends to the boundary as a continuous map and equals to $\Psi$ on a dense set of points on the common domain of definition, namely on $\infty$ and its iterated preimages. Denote the continuous extension by $\Psi_j^2$. Note that $\Psi_j^2=\Psi$ on $\partial\cA_j^{\circ1}$. The conjugacy is now extended to all of immediate basins of the parabolic fixed point at $\infty$. 
 
 Now we extend it to all other components of the parabolic basin $\cA^1(\infty)$. For a given ${1\leq j\leq n}$, let $U$ be a component of $N_{p_1 e^{q_1}}^{-1}(\cA_j^{\circ1})$ other than $\cA_j^{\circ1}$. Let $c_u$ be the center of $U$, which maps to the critical point in $\cA_j^{\circ1}$, and let $k=\deg(N_{p_1 e^{q_1}},c_u)$ be the local degree. Then $\Psi(\partial U)$ is the boundary of a unique component of $N_{p_2 e^{q_2}}(\cA_j^{\circ2})$, denote the component by $V$, and let $c_v$ denote its unique center. There exist Riemann maps $\psi_U:U\to\D$ and $\psi_V:V\to\D$ such that $\psi_U(c_u)=\psi_V(c_v)=0$ with the following commutative diagrams.
 \[\begin{diagram}[heads=LaTeX,l>=3.5em]
 U			&\rTo^{N_{p_1 e^{q_1}}} & \cA_j^{\circ1} \\
 \dTo^{\psi_U}	&        	   			& \dTo_{\psi_j^1}\\
 \D       		&\rTo^{z\mapsto z^k}   	& \D\\
 \uTo^{\psi_V}	&        	   			& \uTo_{\psi_j^2}\\
 V   			&\rTo^{N_{p_2 e^{q_2}}} & \cA_j^{\circ2}
 \end{diagram}\]	
 These Riemann maps are unique up to post-composition by a rotation of $k^\text{th}$ root of unity. Since we are interested in the composition ${\psi_V}^{-1}\circ\psi_U$, the choice of Riemann maps for both can be restricted to one. Let us fix any choice for $\psi_U$. Now we choose the map $\psi_V$ to be compatible with the dynamics of the Newton maps. Observe that preimages of the invariant ray (the marked access which is associated to the interval $(0,1)$, the zero ray) by $N_{p_1 e^{q_1}}$ in $U$ are mapped by $\psi_U$ to the preimages under $z\mapsto z^k$ of the invariant rays landing at fixed points for $P_{k_j}$ (e.g. $(0,1)$), since the above diagrams are commutative. 
 Note that the map $z\mapsto z^k$ cannot differentiate between these distinct preimages. The map $\Psi$ that is a homeomorphism from $\partial U$ onto $\partial V$ comes in handy. Once $\psi_U$ is chosen, we fix $\psi_V$ in such a way that those preimages of $(0,1)$ by $z\mapsto z^k$ are pulled back to $U$ such that they land at the corresponding points dictated by $\Psi$. There is only one choice of $\psi_V$ for doing this. This choice is compatible with the dynamics of both $N_{p_1 e^{q_1}}$ and $N_{p_2 e^{q_2}}$ on the boundaries of their corresponding Fatou components. 
 
 We define equivalence relations on two copies of $S^1$ for $\psi_U$ and $\psi_V$ correspondingly as we did above. These equivalence relations are the same since both agree on a dense set of common points. Hence ${\psi_V}^{-1}\circ\psi_U$ extends to the closure of $U$ and coincides with $\Psi$ on a dense set of points, thus both are equal on the common domain of definition. This way we extend $\Psi$ to all (first level) components of preimages of immediate parabolic basins. 
 
 We can continue in the same way to extend $\Psi$ to the full basin of $\infty$, since the dynamics are conjugate to the same power maps $z\mapsto z^k$, where $k$ is the common local degree of Newton maps at centers of corresponding components of basins. 
 
 Let us summarize what we have proved so far and recall the definition of $\Psi$, for which we spent the whole Part C,
 \[\Psi= \left \{ 
 \begin{array}{ll}
 \phi_2\circ\phi_1^{-1}\circ \phi^{-1}, & \text{on } \mathbb{S}^2\setminus\cA^1(\infty),\\
 {\psi_V}^{-1}\circ\psi_U, & \text{on }  U \\
 \end{array} \right.\]
 where $U$ and $V$ run over all of the respective components of $\cA^1(\infty)$ and $\cA^2(\infty)$, and all the involved maps in the definition of $\Psi$ are defined. Thus, $\Psi$ is a conjugacy between Newton maps $N_{p_1 e^{q_1}}$ and $N_{p_2 e^{q_2}}$ on $\mathbb{S}^2$, it is conformal in every component of the Fatou set of $N_{p_1 e^{q_1}}$. 
 We still have to show that $\Psi$ is globally continuous on $\mathbb{S}^2$.
 \begin{xcm}
 The map $\Psi$ defined in Part C is a homeomorphism of $\mathbb{S}^2$.
 \end{xcm}
 
 \begin{proof}[Proof of Claim.]
 It suffices to prove continuity of $\Psi$ on $J(N_{p_1 e^{q_1}})$.
 Let us fix $\epsilon>0$, and a sequence of positive reals $\epsilon_k\to 0$ as $k\to\infty$. Consider a sequence of points $\{w_s\}_{s\geq 1}\subset\mathbb{S}^2$ such that $w_s\to w\in J(N_{p_1 e^{q_1}})$ as $s\to \infty$. We shall prove that 
 \begin{equation}\label{Eq:limit}
 \Psi(w_s)\to \Psi(w)\ \text{as}\ s\to\infty.
 \end{equation}
 Since $\Psi$ is a homeomorphism on the Julia set of $N_{p_1 e^{q_1}}$ if we have an inclusion $\{w_{s_k}\}_{k\geq 1}\subset J(N_{p_1 e^{q_1}})$ for some subsequence $\{w_{s_k}\}_{k\geq 1}$ of $w_s$, then $\{\Psi(w_{s_k})\}_{k\geq 1}\subset J(N_{p_1 e^{q_1}})$, hence, the limit \eqref{Eq:limit} holds over the subsequence $\{w_{s_k}\}_{k\geq 1}$. Moreover, if some subsequence $\{w_{s_k}\}_{k\geq 1}$ is contained in $U_{N_{p_1 e^{q_1}}}$, a component of $F(N_{p_1 e^{q_1}})$ (i.e. $\{w_{s_k}\}_{k\geq 1}\subset U_{N_{p_1 e^{q_1}}}$), then along this subsequence the limit \eqref{Eq:limit} holds true since the restriction $\Psi |_{\overline{U}}$ is continuous. Therefore, without loss of generality, we assume that $\{w_s\}_{s\geq 1}\subset F(N_{p_1 e^{q_1}})$ and no subsequence of $\{w_s\}_{s\geq 1}$ is completely contained in a single component of the Fatou set $F(N_{p_1 e^{q_1}})$. As a result of this assumption, the sequence $\{w_s\}_{s\geq 1}$ leaves any given component of $F(N_{p_1 e^{q_1}})$ in finite time. Local connectivity of the Julia set implies that there are only finitely many components of $F(N_{p_1 e^{q_1}})$ with spherical diameter bigger than any given $\epsilon>0$. Now we fix $k$. Sooner or later, points of $\{w_s\}_{s\geq 1}$ leave any Fatou component of $N_{p_1 e^{q_1}}$ with spherical diameter $\geq\epsilon_k$. Observe that the spherical distance between $\psi(w_s)$ and $\psi(w'_s)$ is less than $\epsilon_k$ for all large enough $s$, where $w'_s$ is any point on the boundary of the component where $w_s$ is located, in particular, $w'_s$ is located on $J(N_{p_1 e^{q_1}})$. Clearly along the same ideas, $w'_s\to w$ as $s\to\infty$. Then $w'_s$ converges to the same $w$, since $\Psi$ is continuous on $J(N_{p_1 e^{q_1}})$. The claim is now proved. 
\end{proof}

 \subsection*{Part D - Construction of conformal conjugacy}\footnote{Note that the topological conjugacy $\Psi$ of Part C in this setup is not a c-equivalence between $N_{p_1 e^{q_1}}$ and $N_{p_2 e^{q_2}}$ according to the generalization of Thurston's topological characterization of postcritically finite covering maps to the setting of geometrically finite covering maps with parabolic cycles (please refer to \cite{CT2} for the theory).}
 Using the topological conjugacy of Part C, which is conformal at petals and superattracting domains of $N_{p_1 e^{q_1}}$, by applying an interpolation technique several times we construct the set of quasiconformal homeomorphisms of $\Chat$, which is denoted by $\{\Psi_1,\ldots \Psi_k\}$, where $k$ is the total number of superattracting periodic points of $N_{p_1 e^{q_1}}$. 
 
 Next, we work with $\Psi_k$ from the previous step, and construct, by taking lifts of the local conjugation, $\{\psi_m\}_{m\geq 0}$ a sequence of quasiconformal homeomorphisms of $\Chat$ with uniformly bounded complex dilatation. Finally, by extracting a convergent sub-sequence of the latter we obtain a \emph{conformal} conjugacy between $N_{p_1 e^{q_1}}$ and $N_{p_2 e^{q_2}}$ finishing the proof of the theorem. We divide the dynamical plane of $N_{p_1 e^{q_1}}$ into two parts: some Jordan neighborhood of infinity and the complement of it.
 
 We use $\Psi$ as an initial partial conjugacy between petals at $\infty$ of $N_{p_1 e^{q_1}}$ and $N_{p_2 e^{q_2}}$. Note that, in particular, $\Psi$ is a conformal conjugacy restricted on immediate basins of $\infty$. Let us fix an $\epsilon=1$ (the exact value of $\epsilon$ is not relevant) and a flower at $\infty$. Since $\Psi$ is conformal in the flower, thus $1$-quasiconformal homeomorphism, by Lemma~\ref{Lem:Partial_Local} we obtain a $2$-quasiconformal homeomorphism $\phi$ defined locally at $\infty$ that is a conjugacy between $N_{p_1 e^{q_1}}$ and $N_{p_2 e^{q_2}}$ such that $\phi=\Psi$ on a smaller attracting flower bounded by curves $l_1,\ldots,l_n$. Extend the conjugacy to big petals to include critical points, denote the extended conjugacy and petal boundaries with previously used notations. The extension is possible thanks to conformality of $\Psi$, thus it still conjugates the Newton maps. We fix some quasicircle $L_1$ in the domain of definition of $\phi$ such that $L_1$ separates all superattracting periodic points (including fixed points) of $N_{p_1 e^{q_1}}$ from the big flower, including critical points and their orbits, where we had the equality $\phi=\Psi$. 
 Denote by $L_1^+$ and $L_1^-$ the unbounded and bounded components of the complement of $L_1$ respectively. Consider $L_2=\phi(L_1)$ the corresponding quasicircle in the dynamical plane of $N_{p_2 e^{q_2}}$. Moreover, $L_2$ separates the attracting flower from all superattracting periodic points of $N_{p_2 e^{q_2}}$. Similarly, denote by $L_2^+$ the unbounded component of the complement of $L_2$, and by $L_2^-$ the bounded component. 
 
 We shall extend $\phi$ to the bounded domain $L_1^-$ as a quasiconformal homeomorphism that is conformal on disk neighborhoods of superattracting cycles of $N_{p_1 e^{q_1}}$. Moreover, it will be equal to the map $\Psi$ defined above, thus a conformal conjugacy between $N_{p_1 e^{q_1}}$ and $N_{p_2 e^{q_2}}$ on neighborhoods of superattracting periodic points (including fixed).
 
 In case there exists no superattracting periodic (including fixed) points of $N_{p_1 e^{q_1}}$ we extend $\phi$ by applying item~(a) of Proposition~\ref{Prop:Interpolation} to $L^-_1$. 
 In case there exist superattracting periodic points or fixed points of $N_{p_1 e^{q_1}}$, we extend $\phi$ by applying item~(b) of Proposition~\ref{Prop:Interpolation} to $L^-_1$ sequentially in small disks about all periodic points specified below. Let $C_1,C_2,\ldots,C_k$ denote the list of disjoint simple closed analytic curves contained in $L^-_1$, one for each element of superattracting cycles (the critical point and its orbit) that bound an element in its immediate basin for $N_{p_1 e^{q_1}}$ (see Figure~\ref{Fig:multiannulus} for the illustration of the construction of interpolations). For every $i\le k$, let $\Omega^1_i$ be the closed disk bounded by $C_i$, then $\Omega^1_i\Subset L^-_1$. 
 
 \begin{figure}[ht]
 	\centering
 	\includegraphics[scale=.32]{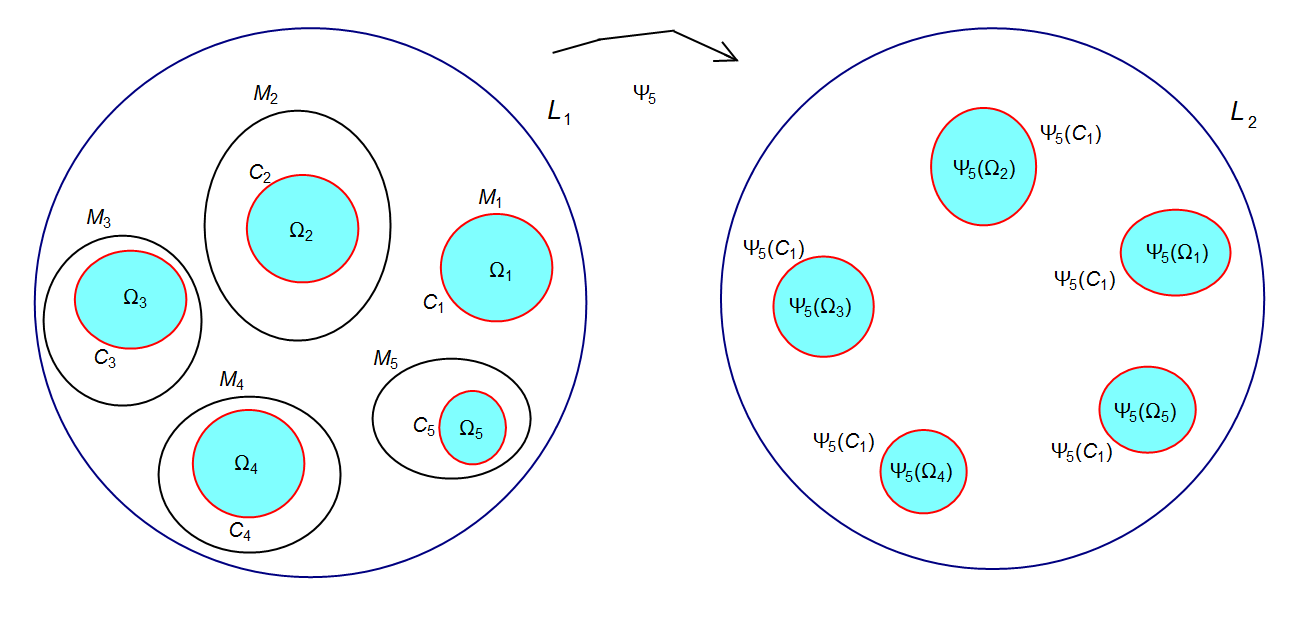}
 	
 	\caption{A schematic illustration of the construction of interpolations. {\em Left:} The analytic disks $\Omega^1_i$ in $N_{p_1 e^{q_1}}$ plane are shown. {\em Right:} The corresponding image for $N_{p_2 e^{q_2}}$ plane.}
 	\label{Fig:multiannulus}
 \end{figure}	
 
 Observe that the images $\Psi(\Omega^1_1),\Psi(\Omega^1_2),\ldots,\Psi(\Omega^1_k)$ are closed disks in $L_2^-$ bounded by analytic curves $\Psi(C_1),\Psi(C_2),\ldots,\Psi(C_k)$, each of which surrounds the corresponding superattracting periodic point (or fixed point) of $N_{p_2 e^{q_2}}$ in its immediate basin. 
 
 We are in position to apply item~(b) of Proposition~\ref{Prop:Interpolation}. First, consider a quasiannulus with the internal boundary $C_1$ and the external boundary $L^1$. Interpolate inner and outer maps $\Psi|_{C_1}$ and $\Psi|_{L_1}$ using item (b) of Theorem~\ref{Prop:Interpolation} to produce a quasiconformal homeomorphism of $\Chat$, denote it by $\Psi_1$. 
 
 Second, we continue the application of item~(b) of Proposition~\ref{Prop:Interpolation} to the next analytic curve $C_2$ and the map $\Psi_1$, which is obtained in the first step. We need to specify the boundary maps, too. There exist many ways to do this. One way is to shrink the curve $C_2$, while keeping the center unchanged, which is a superattracting periodic point (or fixed) of $N_{p_1 e^{q_1}}$. Taking an analytic curve $\tilde{C_2}$ within $\Omega^1_2$ suffices. Another way is to take some quasicircle located within $L^-_1$, denote it by $M_2$, which bounds the curve $C_2$ and separates it from $C_1$. Following the latter way, we have $\Psi_1|_{M_2}$ and $\Psi_1|_{C_2}$ as external and internal maps respectively. Interpolation gives us a quasiconformal homeomorphism of $\Chat$, denote this map by $\Psi_2$. Moreover, $\Psi_2$ is conformal on the union of $\Omega_1^1$ and $\Omega_2^1$. 
 
 Finally, we take some quasicircle located within $L^-_1$, denote it by $M_k$, that bounds the curve $C_k$ and separates it from all other curves $C_1,C_2,\ldots,C_{k-1}$. We consider $\Psi_{k-1}|_{M_k}$ and $\Psi_{k-1}|_{C_k}$ as external and internal maps respectively for the next interpolation. We obtain a quasiconformal homeomorphism of $\Chat$, denote it by $\Psi_k$. Moreover, $\Psi_k$ is conformal on all of $\Omega_i^1$ for $i\le k$.
 
 To ease the notations, let us denote the last interpolating map by $\psi_0$, i.e. $\psi_0:=\Psi_k$, and denote by $\Omega^1$ the union of $\Omega^1_i$ for $1\leq i\leq k$ and the open parabolic flower bounded by $l_1,\ldots,l_n$, and let $\psi_0(\Omega^1)=\Omega^2$. By construction $\psi_0=\Psi$ and $\psi_0\circ N_{p_1 e^{q_1}}=N_{p_2 e^{q_2}}\circ\psi_0$ on $\Omega^1$. i.e. the following diagram commutes.
 \[
 \begin{diagram}[heads=LaTeX,l>=3em]
 \Omega^1        &\rTo^{N_{p_1 e^{q_1}}}   & N_{p_1 e^{q_1}}(\Omega^1) \\
 \dTo^{\phi_0} &      & \dTo_{\phi_0}\\
 \Omega^2 & \rTo^{N_{p_2 e^{q_2}}}   & N_{p_2 e^{q_2}}(\Omega^1)
 \end{diagram}
 \]
 \subsection*{Lifting to obtain a sequence of quasiconformal maps.} 
 Recall that $C_f$ denotes the set of critical points of $f$. Let us define sets: for $i\in\{1,2\}$, $V^i:=N_{p_i e^{q_i}}(C_{N_{p_i e^{q_i}}})$, the set of critical values of $N_{p_i e^{q_i}}$, and $T^i:=N_{p_i e^{q_i}}^{-1}(V^i)$, the preimage of $V^i$ under $N_{p_i e^{q_i}}$. If $\phi_0(V^1)\not=V^2$ then we include the sets $\phi_0^{-1}(V^2)$ and $\phi_0^{-1}(V^1)$ to $V^1$ and $V^2$ respectively. Then we define corresponding $T^i$'s.
 
 For $i\in\{1,2\}$, we have unbranched covering maps $N_{p_i e^{q_i}}:\Chat\setminus T^i\to\Chat\setminus V^i$. The maps $\psi_0\circ N_{p_1 e^{q_1}}:\Chat\setminus T^1\to\Chat\setminus V^2$ and $\psi^{-1}_0\circ N_{p_2 e^{q_2}}:\Chat\setminus T^2\to\Chat\setminus V^1$ are unbranched covering maps as well. Let $\Omega^0$ be a component of $\Omega^1$. Our Newton map has at least one petal at $\infty$, so as $\Omega^0$ we take a petal of $\infty$, a component of $\Omega^1$ assotiated to a petal at $\infty$. Let us fix a base point $x_0\in\Omega^0\setminus O(C_{N_{p_2 e^{q_2}}})$ for the domain $\Chat\setminus V^2$, where $O(C_{N_{p_2 e^{q_2}}})$ denotes the union of grand orbits of critical points of $N_{p_2 e^{q_2}}$. We have $V^2\cup T^2\subset O(C_{N_{p_2 e^{q_2}}})$. In fact, more is true: for $i\in\{1,2\}$, the grand total orbit of critical points $O(C_{N_{p_i e^{q_i}}})$ is generated by $V^i$ (and also by $T^i$). Since $\Omega^0\subset N_{p_2 e^{q_2}}^{-1}(\Omega^0)$, observe that $N_{p_2 e^{q_2}}^{-1}(\Omega^0)$ has many components in the immediate basin associated to the petal $\Omega^0$. As a base point for the domain $\Chat\setminus T^2$, let us fix a preimage $N_{p_2 e^{q_2}}^{-1}(x_0)$ in $N_{p_2 e^{q_2}}^{-1}(\Omega^0)$, denoted by $y_0$. Preimages $\psi_0^{-1}(x_0)$ and $\psi_0^{-1}(y_0)$ are base points for domains associated to $N_{p_1 e^{q_1}}$, as $\psi_0$ is a bijection. The map $\psi$ is a homeomorphism, therefore the induced maps on the fundamental groups of the involved domains are isomorphisms. We can invoke Lemma~\ref{Lem:Lifting}, the unique lift $\psi_1$ of $\psi_0\circ N_{p_1 e^{q_1}}$ is a map from $\Chat\setminus T^1$ onto $\Chat\setminus V^2$ such that $\psi_1(\psi_0^{-1}(y_0))=y_0$ and $\psi_0\circ N_{p_1 e^{q_1}}=N_{p_2 e^{q_2}}\circ\psi_1$ on $\Chat\setminus T^1$.
 \[\begin{diagram}[heads=LaTeX,l>=3em]
 \Chat\setminus T^1  &\rTo^{\psi_1}   & \Chat\setminus T^2 \\
 & \rdTo_{\psi_0\circ N_{p_1 e^{q_1}}} & \dTo_{N_{p_2 e^{q_2}}}\\
 &   & \Chat\setminus V^2
 \end{diagram}\]
 We extend $\psi_1$ to the finite set $T^1$ as a continuous map. Observe that $\psi_1=\psi_0=\Psi$ on $\Omega^0$. 
 The unique lift $\tilde \psi_1$ of $\psi_0^{-1}\circ N_{p_2 e^{q_2}}$ is a map from $\Chat\setminus T^2$ onto $\Chat\setminus V^1$ such that $\tilde\psi_1(y_0)=\psi_0^{-1}(y_0)$ and
 $\psi_0^{-1}\circ N_{p_2 e^{q_2}}= N_{p_1 e^{q_1}}\circ\tilde\psi_0$ on $\Chat\setminus T^2$.
 \[\begin{diagram}[heads=LaTeX,l>=3em]
 \Chat\setminus T^1  & \lTo^{\psi_1}   				& \Chat\setminus T^2 \\
 \dTo_{N_{p_1 e^{q_1}}}			& \ldTo_{\psi_0^{-1}\circ N_{p_2 e^{q_2}}}  & \\
 \Chat\setminus V^1 & 				& 
 \end{diagram}\]
 Similarly, we extend $\tilde\psi_1$ to the finite set $T^2$ as a continuous map. It is easy to observe that $\psi_1$ and $\tilde\psi_1$ are inverses to each other on $\Chat$. Moreover, $\psi_1$ is a quasiconformal homeomorphism with the same complex dilatation as $\psi_0$. By continuing this lifting process, as lifts are carried out with holomorphic maps, we obtain a sequence of quasiconformal maps $\{\psi_m\}_{m\geq 0}$ with the same bound on complex dilatations such that for every $m\ge0$ we have $\psi_{m+1}=\psi_0=\Psi$ on $\Omega^0$ and $\psi_m\circ N_{p_1 e^{q_1}}=N_{p_2 e^{q_2}}\circ\psi_{m+1}$ on $\Chat$. Note that $\psi_m=\psi_{0}$ is conformal on $N_{p_1 e^{q_1}}^{\circ (-m)}(\Omega^1)$. The sequence $\{\psi_m\}_{m\geq 0}$ is a normal family, so it has a convergent sub-sequence, denote it by $\{\psi_{m_k}\}_{k\geq 0}$, and let $\psi_\infty$ be its limit. From the fact that the space of quasiconformal homeomorphisms with uniformly bounded dilatations is compact it follows that $\psi_\infty$ is quasiconformal. Note that, as constructed by lifts, the map $\psi_\infty$ is conformal on $\cup_{m=0}^{\infty} N_{p_1 e^{q_1}}^{\circ(-m)}(\Omega^1)$, the complement of which is a measure zero Julia set of $N_{p_1 e^{q_1}}$. Thus, $\psi_\infty$ is conformal on $\Chat$, a M\"obius transformation. We have $\psi_\infty\circ N_{p_1 e^{q_1}}=N_{p_2 e^{q_2}}\circ \psi_\infty$ on $\Omega^0$. 
 
 Consider a rational map $R:=\psi_\infty^{-1}\circ N_{p_2 e^{q_2}}\circ \psi_\infty$. Followed by the construction of the sequence of lifts, we have $R=N_{p_1 e^{q_1}}$ on $\Omega^0$. By the identity principle of holomorphic functions, we obtain $R=N_{p_1 e^{q_1}}$ on $\Chat$, i.e. $\psi_\infty\circ N_{p_1 e^{q_1}}=N_{p_2 e^{q_2}}\circ \psi_\infty$ on $\Chat$.
 The proof of surjectivity is finished here. 
 
\end{proof}
 
 \section{Proof of main theorem~\ref{Thm:Main}}
 Let us recall the definitions of the spaces with which we are working. For a pair of natural numbers $d\ge 3$ and $1\le n\le d$ , we have denoted by $\cN_{\text{pcm}}(d-n,n)$ the space of normalized postcritically minimal Newton maps $N_{p e^q}$ of degree $d\geq 3$ with $n$ petals at $\infty$. It was denoted by $\cN^{+,n}_\text{pcf}(d)$ the space of normalized postcritically \emph{finite} Newton maps with $n$-markings $\Delta^+_n$. Ha\"issinsky equivalence classes were denoted by $\sim_H$.
 
\begin{proof}[Proof of main theorem~\ref{Thm:Main}.]
 For a pair of natural numbers $d\ge3$ and $1\le n\le d$, define a map $\cF_{n}: \cN^{+,n}_\text{pcf}(d)/\sim_H \to \cN_\text{pcm}(d-n,n)/\text{\em Affine}$ induced by parabolic surgery, i.e. for every postcritically finite Newton map $N_p$ with $n$-marking $\Delta^+_n$ apply Theorem~\ref{Thm:Parabolic_Surgery_Newton}, which results to a postcritically minimal Newton map $N_{p_1 e^{q_1}}$ in $\cN_\text{pcm}(d-n,n)$, normalize $p_1$ and $q_1$ if necessary. The mapping $\cF_{n}$ is well defined, indeed by Theorem~\ref{Thm:Injectivity_Haissinsky} it follows that Ha\"issinsky equivalent classes of marked postcritically finite Newton maps produce affine conjugate results, moreover, a homeomorphism of the theorem preserves the dynamics and embedding of Julia sets. The mapping $\cF_{n}$ is also injective. Its surjectivity follows from Theorem~\ref{Thm:Surjectivity_Haissinsky}. We would like to remark that the parabolic surgery is a natural bijection in a sense that the dynamics and embedding of Julia set are preserved. It is also unique in the sense that different choices of perturbations of a postcritically minimal Newton maps result to the unique postcritically finite Newton map of a polynomial. Thus the correspondence is canonical. 
\end{proof}
 
 In \cite{LMS}, postcritically finite Newton maps of polynomials, of degree at least 3, have been classified in terms of connected finite graphs with certain properties. To get this finite data, one must consider an iterated preimage of the channel diagram along with an extended Hubbard tree of the periodic superattracting cycles of period greater than one. Newton rays connect the latter with the preimages of the former. If we include markings of channel diagram to the above data, then by Main Theorem~\ref{Thm:Main} a classification of postcritically minimal Newton maps of entire maps that take the form $p e^q$, for polynomials $p$ and $q$, becomes an easy corollary of the classification of postcritically finite Newton maps of polynomials.
 
 \subsection*{Acknowledgements}
 The author thanks Dierk Schleicher and the dynamics group at Jacobs University Bremen,
 especially Bayani Hazemach, Russell Lodge, and Sabyasachi Mukherjee, for their comments that helped to improve
 the manuscript version of the paper. The author would like also to show his gratitute to Guizhen Cui for providing his preprint and an anonymous referee for providing insightful comments. Research was partially supported by the ERC advanced grant ``HOLOGRAM" and the Deutsche Forschungsgemeinschaft SCHL 670/4.

\end{document}